\documentclass[11pt,reqno,draft]{amsart}

\usepackage{amssymb}

\newtheorem{theorem}{Theorem}[section]

\newtheorem{proposition}[theorem]{Proposition}

\newtheorem{lemma}[theorem]{Lemma}

\newtheorem{corollary}[theorem]{Corollary}

\DeclareMathOperator{\Aut}{Aut}

\DeclareMathOperator{\End}{End}

\DeclareMathOperator{\F}{F}

\DeclareMathOperator{\Lat}{L}

\DeclareMathOperator{\var}{var}

\DeclareMathOperator{\ZR}{ZR}

\makeatletter

\renewcommand*\subjclass[2][2000]{\def\@subjclass{#2}\@ifundefined
{subjclassname@#1}{\ClassWarning{\@classname}{Unknown edition (#1) of
Mathematics Subject Classification; using '2000'.}}{\@xp\let\@xp
\subjclassname\csname subjclassname@#1\endcsname}}

\makeatother

\begin{document}

\title[Modular and lower-modular elements of lattices of varieties]{Modular
and lower-modular elements\\
of lattices of semigroup varieties}

\author[V. Yu. Shaprynski\v{\i}]{V. Yu. Shaprynski\v{\i}\\
\\
Communicated by L. N. Shevrin}

\address{Department of Mathematics and Mechanics, Ural State University,
Lenina 51, 620083 Ekaterinburg, Russia}

\email{vshapr@yandex.ru}

\thanks{The work was partially supported by the Russian Foundation for Basic
Research (grants No.~09-01-12142,~10-01-00524) and the Federal
Education Agency of the Russian Federation (project
No.~2.1.1/3537).}

\keywords{Semigroup, variety, lattice of varieties, commutative variety,
modular element, lower-modular element}

\subjclass{Primary 20M07, secondary 08B15}

\begin{abstract}
The paper contains three main results. First, we show that if a commutative
semigroup variety is a modular element of the lattice \textbf{Com} of all
commutative semigroup varieties then it is either the variety $\mathcal{COM}$
of all commutative semigroups or a nil-variety or the join of a nil-variety
with the variety of semilattices. Second, we prove that if a commutative
nil-variety is a modular element of \textbf{Com} then it may be given within
$\mathcal{COM}$ by 0-reduced and substitutive identities only. Third, we
completely classify all lower-modular elements of \textbf{Com}. As a
corollary, we prove that an element of \textbf{Com} is modular whenever it is
lower-modular. All these results are precise analogues of results concerning
modular and lower-modular elements of the lattice of all semigroup varieties
obtained earlier by Jezek, McKenzie, Vernikov, and the author. As an
application of a technique developed in this paper, we provide new proofs of
the `prototypes' of the first and the third our results.
\end{abstract}

\maketitle

\section{Introduction and summary}
\label{intr}

There are several papers where special elements of the lattice of all
semigroup varieties were examined. Results in this area obtained before 2009
have been overviewed in Section~14 of the survey~\cite
{Shevrin-Vernikov-Volkov-09}. Recall the definitions of special elements
mentioned in the present paper. An element $x$ of a lattice $\langle L;\vee,
\wedge\rangle$ is called
\begin{align*}
&\text{\emph{modular} if}\quad&&\forall\,y,z\in L\colon\quad y\le z
\longrightarrow(x\vee y)\wedge z=(x\wedge z)\vee y,\\
&\text{\emph{lower-modular} if}\quad&&\forall\,y,z\in L\colon\quad x\le y
\longrightarrow x\vee(y\wedge z)=y\wedge(x\vee z),\\
&\text{\emph{distributive} if}\quad&&\forall\,y,z\in L\colon\quad x\vee(y
\wedge z)=(x\vee y)\wedge(x\vee z),
\end{align*}
and \emph{neutral} if, for all $y,z\in L$, the sublattice generated by the
elements $x$, $y$, and $z$ is distributive. \emph{Upper-modular} and \emph
{codistributive} elements are defined dually to lower-modular and
distributive ones respectively. It is evident that every [co]distributive
element is lower-modular [upper-modular] and that every neutral element is
distributive, codistributive, and modular.

The lattice of all semigroup varieties is denoted by \textbf{SEM}. Neutral
elements of this lattice were determined by Volkov in~\cite{Volkov-05}. A
description of distributive and lower-modular elements in \textbf{SEM} was
obtained by Vernikov and the author in~\cite{Vernikov-Shaprynskii-10}
and~\cite{Shaprynskii-Vernikov-lmod} respectively. An essential information
about modular elements of \textbf{SEM} was found in~\cite{Jezek-McKenzie-93,
Vernikov-07-mod}. Some of the mentioned results are interesting from the
point of view of this paper. To formulate these results, we need some
definitions and notation.

For convenience, we call a semigroup variety \emph{modular} if it is a
modular element of the lattice \textbf{SEM}, and adopt analogous agreement
for all other types of special elements. As usual, we replace a pair of
identities $wx=xw=w$ where the letter $x$ does not occur in the word $w$ by
the symbolic identity $w=0$. Identities of the form $w=0$ as well as
varieties given by such identities are called 0-\emph{reduced}. An identity
$u=v$ is called \emph{substitutive} if the words $u$ and $v$ depend on the
same letters and the word $v$ may be obtained from $u$ by renaming of
letters. Recall that a semigroup variety is called a \emph{nil-variety} if it
consists of nil-semigroups (or, equivalently, if it satisfies an identity of
the form $x^n=0$ for some $n$). It is evident that every 0-reduced variety is
a nil-variety. By $\mathcal T$, $\mathcal{SL}$, and $\mathcal{SEM}$ we denote
the trivial variety, the variety of all semilattices, and the variety of all
semigroups respectively.

\begin{proposition}
\label{SEM mod nec}
If a semigroup variety $\mathcal V$ is modular then either $\mathcal{V=SEM}$
or $\mathcal{V=M\vee N}$ where $\mathcal M$ is one of the varieties $\mathcal
T$ or $\mathcal{SL}$, while $\mathcal N$ is a nil-variety.\qed
\end{proposition}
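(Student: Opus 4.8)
The plan is to establish the necessary condition by a chain of reductions, each step invoking the modular law for a suitably chosen pair $\mathcal Y\le\mathcal Z$. Assume $\mathcal V$ is modular and $\mathcal V\ne\mathcal{SEM}$. I would start from the classical structural dichotomy: a periodic semigroup variety is a nil-variety if and only if it contains none of the ``non-nilpotent atoms'' $\mathcal{SL}$, $\mathcal{LZ}$ (left zero semigroups), $\mathcal{RZ}$ (right zero semigroups), $\mathcal{A}_p$ (abelian groups of prime exponent $p$); moreover, a non-periodic variety contains the infinite monogenic semigroup and hence the whole variety $\mathcal{COM}$ of commutative semigroups, in particular every $\mathcal{A}_p$, and a variety containing a nontrivial group contains some $\mathcal{A}_p$ as well. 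Consequently it suffices to prove that (i) $\mathcal V\not\supseteq\mathcal{LZ}$, (ii) $\mathcal V\not\supseteq\mathcal{RZ}$, (iii) $\mathcal V\not\supseteq\mathcal{A}_p$ for every prime $p$, and then to analyse the case $\mathcal V\supseteq\mathcal{SL}$; note that (iii) already forces $\mathcal V$ to be periodic and to contain no nontrivial group.

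The heart of the argument is (i)--(iii). For each forbidden atom $\mathcal X$ I would argue by contradiction: assuming $\mathcal X\subseteq\mathcal V$, I would exhibit varieties $\mathcal Y\le\mathcal Z$ — assembled from $\mathcal X$ itself together with small aperiodic varieties such as $\mathcal{ZM}$ (zero multiplication), $\mathcal{LZ}$, $\mathcal{RZ}$, $\mathcal{SL}$ and their joins — for which the two sides $(\mathcal V\vee\mathcal Y)\wedge\mathcal Z$ and $(\mathcal V\wedge\mathcal Z)\vee\mathcal Y$ of the modular law can be evaluated explicitly and seen to be unequal unless $\mathcal V=\mathcal{SEM}$, contradicting $\mathcal V\ne\mathcal{SEM}$. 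Carrying this out needs two kinds of computational input: a description of the relevant joins $\mathcal V\vee\mathcal Y$ (or at least of their traces on the interval $[\mathcal T,\mathcal Z]$), obtained via identity bases and free objects in the small varieties involved, and a description of the subvariety lattices $\Lat(\mathcal Z)$ of the overvarieties that occur. I expect this step — locating the right test pairs $\mathcal Y\le\mathcal Z$ and performing the meet/join calculations — to be the main obstacle; the technique developed in the present paper is meant precisely to organise these configurations and make the verification uniform.

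Granting (i)--(iii), $\mathcal V$ is periodic, contains no nontrivial group, and omits $\mathcal{LZ}$ and $\mathcal{RZ}$. If $\mathcal V$ is already a nil-variety we are done with $\mathcal M=\mathcal T$. Otherwise the dichotomy gives $\mathcal V\supseteq\mathcal{SL}$, and here I would put $\mathcal N=\mathcal V\wedge[x^m=0]$, where $m$ is chosen so that $\mathcal V$ satisfies $x^m=x^{m+k}$ for some $k$; a routine argument shows that $\mathcal N$ is the largest nil-subvariety of $\mathcal V$, and clearly $\mathcal{SL}\vee\mathcal N\subseteq\mathcal V$. For the reverse inclusion one uses that, because $\mathcal V$ omits $\mathcal{LZ}$, $\mathcal{RZ}$ and all nontrivial groups, every member of $\mathcal V$ is a semilattice of nil-semigroups each satisfying $x^m=0$: all its principal factors are either null or the two-element semilattice, since any other completely $0$-simple aperiodic Rees factor would contain a nontrivial left- or right-zero subsemigroup, and each archimedean component has trivial completely simple kernel and hence is a nil-semigroup of index at most $m$. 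By the standard description of varieties of the form $\mathcal{SL}\vee\mathcal N$ for a nil-variety $\mathcal N$ this yields $\mathcal V\subseteq\mathcal{SL}\vee\mathcal N$, and taking $\mathcal M=\mathcal{SL}$ with $\mathcal N$ as above completes the proof.
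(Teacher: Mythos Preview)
Your plan has a genuine gap in the final structural step, and it is not a minor one. You propose to reduce everything to showing that a modular $\mathcal V\ne\mathcal{SEM}$ omits $\mathcal{LZ}$, $\mathcal{RZ}$ and every $\mathcal A_p$, and then to conclude $\mathcal V=\mathcal{SL}\vee\mathcal N$ (or $\mathcal V=\mathcal N$) from the fact that each member of $\mathcal V$ is a semilattice of nil-semigroups. This last implication is false. The variety $\mathcal P=\var\{xy=x^2y,\ x^2y^2=y^2x^2\}$ is periodic (it satisfies $x^2=x^3$), contains neither $\mathcal{LZ}$ nor $\mathcal{RZ}$ (both violate $x^2y^2=y^2x^2$), and contains no nontrivial group (any group satisfying $xy=x^2y$ is trivial). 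Every member of $\mathcal P$ is indeed a semilattice of nil-semigroups; yet $\mathcal P$ is \emph{not} of the form $\mathcal{SL}\vee\mathcal N$ for any nil-variety $\mathcal N$: the largest nil-subvariety of $\mathcal P$ satisfies $xy=0$, so one would get $\mathcal P\subseteq\mathcal{SL}\vee\mathcal{ZM}$, a commutative variety, whereas $\mathcal P$ is non-commutative (by the word problem for $\mathcal P$, the words $xy$ and $yx$ are distinct in its free object). The error is the sentence ``By the standard description of varieties of the form $\mathcal{SL}\vee\mathcal N$ \dots\ this yields $\mathcal V\subseteq\mathcal{SL}\vee\mathcal N$'': being a semilattice of semigroups from $\mathcal N$ does not place a semigroup in $\mathcal{SL}\vee\mathcal N$. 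Excluding the atoms is therefore not enough; one must also exclude $\mathcal P$ and its dual $\overleftarrow{\mathcal P}$ (this is exactly the hypothesis of Lemma~\ref{M+N}), and even then one only gets $\mathcal V=\mathcal M\vee\mathcal N$ with $\mathcal M$ generated by a monoid, so a further argument is needed to force $\mathcal M\subseteq\mathcal{SL}$.

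The paper's route is quite different from yours and bypasses this obstacle. Rather than testing the modular law on hand-picked small varieties, it works at the level of fully invariant congruences on the free semigroup and proves a transfer lemma (Lemma~\ref{u=v,s=t->u=s}): if $\mathcal V$ is modular and satisfies two balanced identities $u=v$ and $s=t$ with $u,v,s,t$ of the same length and content, pairwise non-equivalent and ``unstable'', then $\mathcal V$ also satisfies $u=s$. Suitable choices of $u,v,s,t$ then force $\mathcal V$ to satisfy concrete identities such as $xy^{n+4m+3}z^nq=qy^{n+4m+2}z^{n+1}x$, which simultaneously fail in $\mathcal{LZ}$, $\mathcal{RZ}$, $\mathcal P$, $\overleftarrow{\mathcal P}$; Lemma~\ref{M+N} then gives the $\mathcal M\vee\mathcal N$ decomposition, and a second application of the transfer lemma produces an identity that, upon erasing variables, yields $z=z^2$ in $\mathcal M$, forcing $\mathcal M\in\{\mathcal T,\mathcal{SL}\}$. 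Your idea of exhibiting explicit $\mathcal Y\le\mathcal Z$ could in principle be made to work, but you would have to add separate arguments excluding $\mathcal P$ and $\overleftarrow{\mathcal P}$ and then still handle the passage from ``$\mathcal M$ generated by a monoid'' to ``$\mathcal M\subseteq\mathcal{SL}$''; the congruence-level approach does all of this uniformly.
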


This fact was proved (in slightly weaker form and some other terminology)
in~\cite{Jezek-McKenzie-93}, Proposition 1.6; a deduction of Proposition~\ref
{SEM mod nec} from~\cite[Proposition~1.6]{Jezek-McKenzie-93} is given
explicitly in~\cite[Proposition~2.1]{Vernikov-07-mod}. Proposition~\ref
{SEM mod nec}, together with Lemma~\ref{join with SL} formulated below,
completely reduces the problem of description of modular varieties to the
nil-case. The following necessary condition for a nil-variety to be modular
(stronger than one given by Proposition~\ref{SEM mod nec}) is true.

\begin{proposition}[{\cite[Theorem~2.5]{Vernikov-07-mod}}]
\label{SEM mod nil-nec}
If a nil-variety of semigroups is modular then it may be given by $0$-reduced
and substitutive identities only.\qed
\end{proposition}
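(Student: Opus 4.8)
The plan is to argue by contraposition, reducing the proposition by an elementary manipulation to the refutation of the modular law for $\mathcal V$ on a single well-chosen comparable pair of varieties. Let $\Sigma_0$ be the set of all $0$-reduced identities holding in $\mathcal V$ and $\Sigma_s$ the set of all substitutive identities holding in $\mathcal V$, and put $\mathcal V_0=\var(\Sigma_0\cup\Sigma_s)$, so that $\mathcal V\subseteq\mathcal V_0$ trivially. To obtain $\mathcal V=\mathcal V_0$ it is enough to check that every identity $u=v$ of $\mathcal V$ follows from $\Sigma_0\cup\Sigma_s$. If $\mathcal V\models u=0$ then also $\mathcal V\models v=0$, and both identities lie in $\Sigma_0$; hence the whole proposition reduces to the claim that \emph{every identity $u=v$ of a modular nil-variety $\mathcal V$ with $\mathcal V\not\models u=0$ is substitutive}, i.e.\ (contrapositively) that a nil-variety satisfying a non-substitutive identity $u=v$ with $\mathcal V\not\models u=0$ cannot be modular. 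This last assertion is what I would prove.

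Fix such data, together with $n$ with $\mathcal V\models x^n=0$, and normalise $u=v$. The case $|u|\ne|v|$ is disposed of separately and easily (a short argument, specialising letters and using periodicity, forces $\mathcal V\models u=0$, contrary to hypothesis), so assume $|u|=|v|=:\ell$. Since $u=v$ is not substitutive while the two words have equal length, the partitions of the position set $\{1,\dots,\ell\}$ into classes of positions carrying equal letters are different for $u$ and for $v$; hence, after interchanging $u$ and $v$ if necessary, there are positions $i<j$ with $u_i=u_j$ but $v_i\ne v_j$. Moreover, if $\mathcal V$ satisfied ``every word of length $\ell$ equals $0$'' we would again get $\mathcal V\models u=0$; so it does not, and $u$ is a genuinely nonzero value in $\mathcal V$. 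Thus the task is: a modular nil-variety identifies a nonzero word $u$ with a word $v$ of the same length that splits the repeated letter of $u$ occurring at positions $i$ and $j$; derive a contradiction.

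The contradiction should come from applying the modular law $(\mathcal V\vee\mathcal Y)\wedge\mathcal Z=(\mathcal V\wedge\mathcal Z)\vee\mathcal Y$ to a comparable pair $\mathcal Y\le\mathcal Z$ of small varieties of nilpotent semigroups, presented as Rees quotients of a finitely generated free semigroup by the ideal generated by all words of length exceeding $\ell$ together with finitely many shorter ones. They should be engineered so that: (i) $\mathcal V$ and $\mathcal Z$ are incomparable (this is necessary, since otherwise the modular law holds for the triple automatically); (ii) $\mathcal V\wedge\mathcal Z$ satisfies an identity $p=q$ that is a joint consequence of the ``bad'' identity $u=v$ and a defining identity of $\mathcal Z$ --- intuitively, $p=q$ records the coincidence of the letters at positions $i$ and $j$ of $u$, once $\mathcal Z$ has flattened the rest of $u$; and (iii) $\mathcal Y$ satisfies $p=q$ and $\mathcal Y\le\mathcal Z$ but $\mathcal Y\not\le\mathcal V$, with $\mathcal Y$ chosen so that the consequences of $u=v$ that $\mathcal Y$ still satisfies are, together with the identities of $\mathcal Z$, too weak to yield $p=q$. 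Then $(\mathcal V\wedge\mathcal Z)\vee\mathcal Y\models p=q$ while $(\mathcal V\vee\mathcal Y)\wedge\mathcal Z\not\models p=q$, so $\mathcal V$ is not modular.

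I expect the main obstacle to be exactly this construction: $\mathcal Y$, $\mathcal Z$ and $p=q$ must be written down explicitly and (i)--(iii) verified by hand, and a suitable choice seems to depend on the shape of $u$ and on the position of the coincident pair relative to the remaining letters, so a short case distinction is probably unavoidable. Within that, the delicate point is (iii): one has to control the equational theory of the join $\mathcal V\vee\mathcal Y$ --- that is, the identities common to $\mathcal V$ and $\mathcal Y$ --- tightly enough to be sure that $p=q$ escapes it even after the identities of $\mathcal Z$ are added in. Once this is carried out for every admissible shape of $(u,v)$, the claim, and hence the proposition, follows.
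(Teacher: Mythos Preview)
The paper does not prove this proposition: it is quoted from \cite{Vernikov-07-mod} and marked \qed. What the paper \emph{does} prove is the commutative analogue, Theorem~\ref{Com mod nil-nec}, and that argument is structurally quite different from your plan, so let me address both the gap in your proposal and the comparison.

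Your proposal is a strategy, not a proof, and the decisive step is explicitly left open. You say you would build nilpotent varieties $\mathcal Y\le\mathcal Z$ (as Rees quotients of free semigroups) violating the modular law for $\mathcal V$, you identify this as ``the main obstacle,'' you anticipate that the choice depends on the shape of $u$ and on the location of the coincident pair of positions, and you expect ``a short case distinction.'' None of this is carried out. Condition~(iii) in particular --- controlling the identities of $\mathcal V\vee\mathcal Y$ tightly enough to show that $p=q$ fails there even after adjoining the identities of $\mathcal Z$ --- is the heart of any such argument, and it is precisely here that naive choices tend to break. Until the triple $(\mathcal Y,\mathcal Z,p=q)$ is written down and (i)--(iii) are checked, there is nothing to assess. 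A secondary loose end: the claim that the case $|u|\ne|v|$ is ``disposed of separately and easily'' by specialising letters and invoking periodicity is not self-evident; an identity $u=v$ with $c(u)=c(v)$ and $|u|<|v|$ in a nil-variety does not automatically force $u=0$, so you owe an argument there too.

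For contrast, the paper's proof of Theorem~\ref{Com mod nil-nec} (which parallels Vernikov's original argument) avoids all case analysis by choosing the test varieties differently. One takes an Abelian group variety $\mathcal G$ with $\mathcal V\vee\mathcal G=\ZR(\mathcal V)\vee\mathcal G$ (Lemma~\ref{V+G=ZR(V)+G}), sets $\mathcal X=\var\{u=0\}$, $\mathcal Y=\var\{v=0\}$ (within $\mathcal{COM}$), and $\mathcal Z=\mathcal X\vee\mathcal G$, and applies modularity to the comparable pair $\mathcal G\subseteq\mathcal Z$. A short deduction-theoretic argument then forces $\mathcal X=\mathcal Y$, hence (via Corollary~\ref{equivalence}) $u$ and $v$ are equivalent under a letter permutation, and the required substitutive identity drops out. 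The key advantages over your plan are uniformity --- one choice of test varieties works for every pair $u,v$ --- and the use of a \emph{group} variety $\mathcal G$ rather than a nilpotent one, which is exactly what makes the join $\mathcal V\vee\mathcal G$ tractable.
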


The last earlier result we cite here is the following

\begin{proposition}[{\cite[Theorem~1.1]{Shaprynskii-Vernikov-lmod}}]
\label{SEM lmod}
A semigroup variety $\mathcal V$ is lower-modular if and only if either
$\mathcal{V=SEM}$ or $\mathcal{V=M\vee N}$ where $\mathcal M$ is one of the
varieties $\mathcal T$ or $\mathcal{SL}$, while $\mathcal N$ is a $0$-reduced
variety.\qed{\sloppy

}
\end{proposition}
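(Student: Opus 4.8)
The plan is to prove the two implications separately, with the substance concentrated in the "only if" direction.

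\textbf{Sufficiency.} First I would note that $\mathcal{SEM}$, being the greatest element of \textbf{SEM}, is lower-modular for trivial reasons. For a variety $\mathcal M\vee\mathcal N$ with $\mathcal M\in\{\mathcal T,\mathcal{SL}\}$ and $\mathcal N$ a $0$-reduced variety, I would first reduce, by a reduction lemma in the spirit of Lemma~\ref{join with SL}, the lower-modularity of $\mathcal M\vee\mathcal N$ to the lower-modularity of $\mathcal N$ itself (the case $\mathcal M=\mathcal T$ being vacuous, as $\mathcal T\vee\mathcal N=\mathcal N$). It then remains to check that an arbitrary $0$-reduced variety $\mathcal N$ is lower-modular: given $\mathcal N\le\mathcal Y$ and an arbitrary $\mathcal Z$, the inclusion $\mathcal N\vee(\mathcal Y\wedge\mathcal Z)\subseteq\mathcal Y\wedge(\mathcal N\vee\mathcal Z)$ holds in every lattice, and for the reverse inclusion it suffices to show that every identity holding in both $\mathcal N$ and $\mathcal Y\wedge\mathcal Z$ holds in $\mathcal Y\wedge(\mathcal N\vee\mathcal Z)$. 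Here the $0$-reducedness of $\mathcal N$ is the key: an identity of $\mathcal N$ is either a consequence of the identities of the form $w=0$ holding in $\mathcal N$, or else its two sides coincide as words on the letters they involve; feeding this dichotomy into the hypothesis ``holds in $\mathcal Y\wedge\mathcal Z$'' delivers the missing inclusion.

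\textbf{Necessity.} Suppose $\mathcal V$ is lower-modular and $\mathcal V\ne\mathcal{SEM}$; I would argue in two stages. In the first stage I would establish that $\mathcal V=\mathcal M\vee\mathcal N$ with $\mathcal M\in\{\mathcal T,\mathcal{SL}\}$ and $\mathcal N$ a nil-variety, imitating the proof of Proposition~\ref{SEM mod nec} but invoking only instances of the lower-modular law, always with an overvariety of $\mathcal V$ in the role of $y$: testing against pairs assembled from the atoms $\mathcal{ZM}$, $\mathcal{LZ}$, $\mathcal{RZ}$, $\mathcal{A}_p$ of \textbf{SEM} and from suitable overvarieties of $\mathcal V$, one shows that, unless $\mathcal V=\mathcal{SEM}$, the variety $\mathcal V$ is periodic and contains none of $\mathcal{LZ}$, $\mathcal{RZ}$, $\mathcal{A}_p$, whence the structure theory of semigroup varieties of this restricted type yields the decomposition. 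In the second stage I would show, contrapositively, that the nil-variety $\mathcal N$ must be $0$-reduced: if $\mathcal N$ (hence $\mathcal V$) satisfied an identity $u=v$ that is not a consequence of its identities of the form $w=0$ (so that $u\ne v$ as words and $u$ is a non-zero word of $\mathcal N$), then I would split $u=v$ off into a variety $\mathcal Z$ and pair it with a carefully chosen overvariety $\mathcal Y$ of $\mathcal V$ (morally, $\mathcal V$ with its $0$-reduced identities retained but $u=v$ dropped), so arranged that $u=v$ holds in $\mathcal V\vee(\mathcal Y\wedge\mathcal Z)$ yet fails in $\mathcal Y\wedge(\mathcal V\vee\mathcal Z)$, contradicting lower-modularity.

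\textbf{The main obstacle.} I expect the second stage to be the hard part. Making the construction work requires pinning down the identities of the join $\mathcal V\vee\mathcal Z$ and of the meet $\mathcal Y\wedge\mathcal Z$ precisely enough to exhibit a concrete identity separating $\mathcal Y\wedge(\mathcal V\vee\mathcal Z)$ from $\mathcal V\vee(\mathcal Y\wedge\mathcal Z)$; this is exactly the detailed analysis of words in nil-varieties, and of the relevant joins and meets, that forms the technical heart of the paper. Moreover, the construction has to be strictly finer than the one behind Proposition~\ref{SEM mod nil-nec}: substitutive identities, which the weaker modular law allows a modular variety to satisfy, must be excluded here as well, so the $\mathcal Y$ and $\mathcal Z$ appropriate to lower-modularity cannot simply be the symmetric counterparts of those used for the modular case.
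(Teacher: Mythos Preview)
Your sufficiency argument matches the paper's (Lemmas~\ref{lmod nil} and~\ref{join with SL}). For necessity your outline is broadly sound, but you have inverted the emphasis relative to this paper, and your first stage is underspecified.

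What you call the ``main obstacle''---showing the nil part $\mathcal N$ is 0-reduced---is handled here in one line: Lemma~\ref{join with SL} transfers lower-modularity from $\mathcal V$ to $\mathcal N$, and then Lemma~\ref{lmod nil} (whose ``only if'' half is quoted from~\cite{Vernikov-07-lmod}) finishes. No bespoke $\mathcal Y,\mathcal Z$ are built in this paper for that step. The paper's actual technical contribution is the \emph{first} stage, and the method is not atom-testing. The engine is Lemma~\ref{u=v,s=t->u=s}: for four pairwise $\mathcal X$-non-equivalent, $\mathcal X$-unstable words $u,v,s,t$ of equal length and content, one manufactures fully invariant congruences on $\F(\mathcal X)$ whose non-singleton classes are a single ideal $\mathfrak W$ of ``long'' words together with two-element $\Aut$-orbits such as $\{\xi(U),\xi(S)\}$; plugging these congruences into the lower-modular law forces $u=s$ to hold in $\mathcal V$. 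This device yields periodicity (Corollary~\ref{mod and lmod are periodic}) and is invoked twice in Proposition~\ref{periodic in oc} with carefully chosen quadruples: once so that the derived identity $u=s$ fails in $\mathcal{LZ},\mathcal{RZ},\mathcal P,\overleftarrow{\mathcal P}$ (whence Lemma~\ref{M+N} gives $\mathcal V=\mathcal M\vee\mathcal N$ with $\mathcal M$ generated by a commutative monoid), and again so that the derived identity forces $z=z^2$ in $\mathcal M$, pinning $\mathcal M$ down to $\mathcal T$ or $\mathcal{SL}$. Your exclusion list $\mathcal{LZ},\mathcal{RZ},\mathcal A_p$ does not match the hypothesis of Lemma~\ref{M+N} (one needs $\mathcal P$ and $\overleftarrow{\mathcal P}$, not $\mathcal A_p$), and even after that lemma $\mathcal M$ is only known to be generated by a monoid; a further argument is required to reach $\mathcal M\subseteq\mathcal{SL}$. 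So your first stage, as written, has a genuine gap.
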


It is natural to study special elements not only in the whole lattice \textbf
{SEM} but also in its most important sublattices. One of such sublattices is
the lattice \textbf{Com} of all commutative semigroup varieties. This lattice
is intensively studied and several deep results were obtained here. The
lattice $\mathbf{Com}$ contains an isomorphic copy of any finite lattice
(this follows from results of the papers~\cite{Burris-Nelson-71} and~\cite
{Pudlak-Tuma-80}), so it does not satisfy any non-trivial lattice identity.
On the other hand, this lattice is countably infinite~\cite{Perkins-69}. Some
parametrization of the lattice \textbf{Com} was suggested in~\cite
{Kisielewicz-94}. The study of special elements in \textbf{Com} was started
by the author in~\cite{Shaprynskii-dn} where distributive and neutral
elements in this lattice were described.

Main results of this paper are precise analogoues of Propositions~\ref
{SEM mod nec}--\ref{SEM lmod} for the lattice \textbf{Com}. To formulate
these results, we need some additional definitions and notation. We call a
commutative semigroup variety \emph{modular in} $\mathbf{Com}$ if it is a
modular element of the lattice \textbf{Com}, and adopt analogous agreement
for all other types of special elements. By $\mathcal{COM}$ we denote the
variety of all commutative semigroups. A commutative semigroup variety is
called 0-\emph{reduced in} \textbf{Com} if it is defined within $\mathcal
{COM}$ by 0-reduced identities only. Our main results are the following three
theorems.

\begin{theorem}
\label{Com mod nec}
If a commutative semigroup variety $\mathcal V$ is modular in $\mathbf{Com}$
then either $\mathcal{V=COM}$ or $\mathcal{V=M\vee N}$ where $\mathcal M$ is
one of the varieties $\mathcal T$ or $\mathcal{SL}$, while $\mathcal N$ is a
nil-variety.
\end{theorem}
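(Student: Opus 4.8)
The plan is to mimic the strategy used for the prototype (Proposition~\ref{SEM mod nec}), working inside $\mathbf{Com}$ rather than $\mathbf{SEM}$. The starting observation is the standard structural dichotomy for a commutative semigroup variety $\mathcal V$: either $\mathcal V$ consists of nil-semigroups, or $\mathcal V$ contains the variety $\mathcal{SL}$ of semilattices, or $\mathcal V$ contains a nontrivial variety of abelian groups (equivalently, $\mathcal V$ satisfies no identity of the form $x^{n}=x^{n+m}$ with appropriate constraints). In fact, every commutative variety decomposes as a join $\mathcal V=(\mathcal V\cap\mathcal{SL})\vee\mathcal G\vee\mathcal N$-type pieces, but for our purposes the key point is: if $\mathcal V$ is not a nil-variety and is not of the form $\mathcal M\vee\mathcal N$ with $\mathcal M\in\{\mathcal T,\mathcal{SL}\}$, then either $\mathcal V\supseteq\mathcal{SL}$ together with something that obstructs writing $\mathcal V$ as $\mathcal{SL}\vee\mathcal N$, or $\mathcal V$ contains a nontrivial periodic abelian group variety.

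The heart of the argument is then to show that each of these "bad" cases contradicts modularity in $\mathbf{Com}$. First I would handle the group case: if $\mathcal V$ contains a nontrivial variety of abelian groups, pick a suitable prime $p$ and exhibit three commutative varieties $\mathcal Y\le\mathcal Z$ — built from the abelian group of exponent $p$, a small nil-variety, and (if needed) $\mathcal{SL}$ — for which the modular law $(\mathcal V\vee\mathcal Y)\wedge\mathcal Z=(\mathcal V\wedge\mathcal Z)\vee\mathcal Y$ fails; the computations of the relevant joins and meets are carried out using the known description of the bottom of $\mathbf{Com}$ and the behaviour of identities under join (a word is an identity of a join iff it is an identity of each component, up to the usual caveat). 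Second, if $\mathcal V$ is not a nil-variety but has no nontrivial abelian group subvariety, then $\mathcal V$ must contain $\mathcal{SL}$; here one writes $\mathcal V=\mathcal{SL}\vee(\mathcal V\cap\overline{\mathcal G})$ and argues that the non-nil part forces $\mathcal V\supseteq\mathcal{SL}$ in a way that, unless the remaining part is a nil-variety, again breaks modularity via a carefully chosen pair $\mathcal Y\le\mathcal Z$. The technical engine throughout is the combinatorics of commutative words modulo a variety, i.e. describing the free objects by their exponent-vectors, which makes the lattice operations on the relevant small varieties computable by hand.

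The main obstacle I anticipate is the second case — ruling out the possibility that $\mathcal V$ properly contains $\mathcal{SL}$, is not a nil-variety, yet is not expressible as $\mathcal{SL}\vee\mathcal N$ for a nil-variety $\mathcal N$. Unlike in $\mathbf{SEM}$, where one has a rich supply of non-commutative test varieties (e.g. left/right zero semigroups) to plug into the modular law, in $\mathbf{Com}$ the available "witnesses" are far more limited, so the counterexample varieties $\mathcal Y,\mathcal Z$ must be constructed entirely from semilattices, abelian groups, and commutative nil-varieties. Pinning down exactly which join decomposition of $\mathcal V$ is forced, and then selecting $\mathcal Y\le\mathcal Z$ so that the two sides of the modular identity genuinely differ, is where the real work lies; I expect this to require a short lemma (analogous in spirit to the auxiliary Lemma~\ref{join with SL} referenced in the text) describing when adjoining $\mathcal{SL}$ to a variety is "modular-friendly", after which the remaining bookkeeping is routine.
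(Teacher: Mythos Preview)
Your plan is a genuinely different route from the paper's, and at present it has a real gap rather than just missing bookkeeping. The paper does \emph{not} argue by structural case analysis on~$\mathcal V$ (groups present / absent, $\mathcal{SL}$ present / absent). Instead it proves a single combinatorial lemma (Lemma~\ref{u=v,s=t->u=s}) valid for any overcommutative $\mathcal X$: given four pairwise $\mathcal X$-non-equivalent, $\mathcal X$-unstable words $u,v,s,t$ of the same length and content with $u=v$ and $s=t$ in~$\mathcal V$, modularity of $\mathcal V$ in $\Lat(\mathcal X)$ forces $u=s$ in~$\mathcal V$. The witnesses for the modular law are produced \emph{uniformly}: they are the subvarieties of $\mathcal X$ corresponding to the fully invariant congruences $\beta,\gamma,\delta$ built from the orbits $\{\xi(U),\xi(S)\}$, $\{\xi(V),\xi(T)\}$, $\{\xi(U),\xi(V)\}$ under $\Aut(\F(\mathcal X))$ together with the ideal~$\mathfrak W$. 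One then plugs in carefully chosen words manufactured from the periodicity identity $x^n=x^{n+m}$ of~$\mathcal V$ (Proposition~\ref{periodic in oc}); the derived identity $u=s$ immediately excludes $\mathcal{LZ},\mathcal{RZ},\mathcal P,\overleftarrow{\mathcal P}$ and, after a second application, forces $z=z^2$ in the monoid part, giving $\mathcal M\in\{\mathcal T,\mathcal{SL}\}$ with no case split at all.

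Your plan, by contrast, pushes all the work into producing ad hoc $\mathcal Y\le\mathcal Z$ for each ``bad'' structural case, and this is where the gap lies. You say the group case is the easy one, but try it: with $\mathcal V\supseteq\mathcal A_p$, natural choices such as $\mathcal Y=\var\{x^2=0,xy=yx\}$ and $\mathcal Z=\var\{x^3=0,xy=yx\}$ actually \emph{satisfy} the modular law (one checks $(\mathcal A_p\vee\mathcal Y)\wedge\mathcal Z=\mathcal Y$ via $x^2=x^{p+2}=0$), so the ``computations by hand'' are not as routine as you suggest. More seriously, your anticipated obstacle---a periodic commutative $\mathcal V$ containing $\mathcal{SL}$ but not expressible as $\mathcal{SL}\vee\mathcal N$---is a genuine phenomenon (e.g.\ $\mathcal V$ generated by a finite commutative aperiodic monoid that is not a semilattice), and here you offer no candidate $\mathcal Y,\mathcal Z$ at all. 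The paper's device sidesteps both difficulties simultaneously: rather than guessing varieties, it lets the words $u,v,s,t$ dictate the congruences, and modularity itself manufactures the needed identity. If you want to salvage your approach you would essentially have to rediscover Lemma~\ref{u=v,s=t->u=s}; short of that, the plan as written does not close.
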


\begin{theorem}
\label{Com mod nil-nec}
If a commutative nil-variety of semigroups is modular in $\mathbf{Com}$ then
it may be given within the variety $\mathcal{COM}$ by $0$-reduced and
substitutive identities only.
\end{theorem}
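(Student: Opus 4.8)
The plan is to argue by contrapositive: assuming a commutative nil-variety $\mathcal N$ cannot be defined within $\mathcal{COM}$ by $0$-reduced and substitutive identities alone, I will construct a counterexample to the modularity condition $(\mathcal X\vee\mathcal Y)\wedge\mathcal Z=(\mathcal X\wedge\mathcal Z)\vee\mathcal Y$ (with $\mathcal X=\mathcal N$) inside $\mathbf{Com}$. The first step is structural: since $\mathcal N$ is a commutative nil-variety, every identity it satisfies is a consequence of identities of the forms $w=0$ (the $0$-reduced ones) and $u=v$ with $u,v$ non-zero words of the same content. The hypothesis means that $\mathcal N$ does not follow from its $0$-reduced and substitutive consequences; hence there is some identity $u=v$ holding in $\mathcal N$, with $u$ and $v$ non-zero in $\mathcal N$, that is not substitutive — so, writing $u=x_1^{k_1}\cdots x_n^{k_n}$ and $v=x_1^{\ell_1}\cdots x_n^{\ell_n}$ in commutative normal form, the exponent vectors $(k_i)$ and $(\ell_i)$ are not permutations of one another, yet this identity is not derivable in $\mathcal N$ from substitutive and $0$-reduced identities. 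I would isolate a "minimal" such identity (minimal total degree, or minimal in the number of variables) to have tight control of the witnessing words.

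The core of the argument is then to build the three varieties. Following the pattern of the proof of Proposition~\ref{SEM mod nil-nec} in~\cite{Vernikov-07-mod}, I would take $\mathcal Y$ to be a suitable small $0$-reduced variety below $\mathcal Z$, and $\mathcal Z$ a variety obtained by imposing on $\mathcal{COM}$ exactly the consequences that force the two sides of the modularity equation apart. Concretely, one chooses $\mathcal Z$ so that in $(\mathcal N\wedge\mathcal Z)\vee\mathcal Y$ the identity $u=v$ is still available on one side but the substitutive/$0$-reduced "repair" identities needed to re-derive it are killed, while in $(\mathcal N\vee\mathcal Y)\wedge\mathcal Z$ the word $u$ (say) becomes identified with $0$, or with a different monomial, so the two varieties satisfy genuinely different identities. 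The commutative nil setting makes the lattice operations tractable: meets are intersections of identity sets, and joins of nil-varieties are controlled by the fact that a word is non-zero in $\mathcal A\vee\mathcal B$ iff it is non-zero in one of them, so I can compute both sides of the equation explicitly on monomials.

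The main obstacle I anticipate is staying inside $\mathbf{Com}$: the proof of the $\mathbf{SEM}$ prototype has the whole lattice of semigroup varieties to play in, and some of the auxiliary varieties used there (or the separating identities, which may involve non-commuting letters) need not be commutative. So the real work is to redo the construction using only commutative varieties and commutative identities, checking that the witnessing varieties $\mathcal Y,\mathcal Z$ can be chosen commutative and that the failure of the modular law persists after this restriction — in particular that no unexpected commutative identity collapses the two sides back together. A secondary technical point is verifying that the minimal non-substitutive identity in $\mathcal N$ really does survive into the relevant joins and meets; this is where the minimality choice pays off, letting me rule out spurious derivations by a degree or variable-count argument. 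Once the separating pair of varieties is in hand, the contradiction with modularity is immediate, completing the contrapositive.
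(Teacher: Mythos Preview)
Your proposal correctly identifies the shape of the problem and the main obstacle, but it does not supply the idea needed to overcome it. You acknowledge that ``the real work is to redo the construction using only commutative varieties'' and then stop short of doing so; the plan of taking $\mathcal Y$ to be ``a suitable small $0$-reduced variety'' and $\mathcal Z$ some variety ``obtained by imposing on $\mathcal{COM}$ exactly the consequences that force the two sides apart'' is not a construction, and it is not at all clear that any purely nil or $0$-reduced choice of auxiliary varieties will produce the required separation. The minimality device you invoke has no evident leverage here, since in the commutative setting the identities in play are just exponent vectors and there is no obvious degree-based obstruction to derivability.

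The paper's proof uses a mechanism your outline does not anticipate. It argues directly rather than by contrapositive: given any identity $u=v$ of $\mathcal V$ with $u\ne 0\ne v$ in $\mathcal V$, it shows that $u$ and $v$ must be $\mathcal{COM}$-equivalent, so that $u=v$ follows from commutativity together with a substitutive identity $v=\xi(v)$. The modularity of $\mathcal V$ is exploited not against nil-varieties but against an \emph{Abelian group} variety $\mathcal G$: the crucial input is Lemma~\ref{V+G=ZR(V)+G}, namely $\mathcal V\vee\mathcal A_n=\ZR(\mathcal V)\vee\mathcal A_n$, which lets one trade $\mathcal V$ for its $0$-reduced hull $\ZR(\mathcal V)$ after joining with a group. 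Concretely, with $\mathcal X=\var\{xy=yx,\,u=0\}$, $\mathcal Y=\var\{xy=yx,\,v=0\}$ and $\mathcal Z=\mathcal X\vee\mathcal G$, the modular law (using $\mathcal G\subseteq\mathcal Z$) gives $(\mathcal V\vee\mathcal G)\wedge\mathcal Z=(\mathcal V\wedge\mathcal Z)\vee\mathcal G$; combined with the hull identity and a short deduction argument inside $\ZR(\mathcal V)\wedge\mathcal X$, this forces $\mathcal X=\mathcal Y$, and Corollary~\ref{equivalence} finishes. The presence of a non-nil variety in the modularity triple is precisely what bypasses the difficulty you flagged, and this device is absent from your plan.
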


\begin{theorem}
\label{Com lmod}
A commutative semigroup variety $\mathcal V$ is lower-modular in $\mathbf
{Com}$ if and only if either $\mathcal{V=COM}$ or $\mathcal{V=M\vee N}$ where
$\mathcal M$ is one of the varieties $\mathcal T$ or $\mathcal{SL}$, while
$\mathcal N$ is a $0$-reduced in $\mathbf{Com}$ variety.
\end{theorem}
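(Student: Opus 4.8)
The plan is to prove the two implications of the equivalence separately, following the pattern of the proof of Proposition~\ref{SEM lmod} in~\cite{Shaprynskii-Vernikov-lmod} but carried out inside $\mathbf{Com}$. Throughout I will use that, for $\mathcal V\le\mathcal Y$, the inclusion $\mathcal V\vee(\mathcal Y\wedge\mathcal Z)\le\mathcal Y\wedge(\mathcal V\vee\mathcal Z)$ holds in every lattice; hence the assertion ``$\mathcal V$ is lower-modular in $\mathbf{Com}$'' reduces to the reverse inclusion $\mathcal Y\wedge(\mathcal V\vee\mathcal Z)\le\mathcal V\vee(\mathcal Y\wedge\mathcal Z)$ for all commutative varieties $\mathcal Y\ge\mathcal V$ and $\mathcal Z$, that is, to showing that every identity of $\mathcal V\vee(\mathcal Y\wedge\mathcal Z)$ holds in $\mathcal Y\wedge(\mathcal V\vee\mathcal Z)$.

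For the ``if'' direction, $\mathcal{COM}$ is the greatest element of $\mathbf{Com}$ and is therefore lower-modular for trivial reasons. So let $\mathcal V=\mathcal M\vee\mathcal N$ with $\mathcal M\in\{\mathcal T,\mathcal{SL}\}$ and $\mathcal N$ a $0$-reduced in $\mathbf{Com}$ variety. The key point is that such an $\mathcal N$ is defined within $\mathcal{COM}$ by identities of the form $w=0$ only, which makes its identities inside $\mathcal{COM}$ completely transparent: $\mathcal N$ satisfies an identity $u=v$ relative to $\mathcal{COM}$ precisely when $u$ and $v$ are the same commutative word, or else both $u$ and $v$ are divisible by words that $\mathcal N$ equates to $0$. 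Using this, together with the standard equality $\mathrm{Id}(\mathcal N\vee\mathcal Z)=\mathrm{Id}(\mathcal N)\cap\mathrm{Id}(\mathcal Z)$ and an analysis of how identities are deduced inside the meet $\mathcal Y\wedge(\mathcal N\vee\mathcal Z)$, I would check that every identity of $\mathcal N\vee(\mathcal Y\wedge\mathcal Z)$ holds in $\mathcal Y\wedge(\mathcal N\vee\mathcal Z)$, giving lower-modularity of $\mathcal N$ itself. The summand $\mathcal M=\mathcal{SL}$ is then incorporated with the help of Lemma~\ref{join with SL}, which controls joins and meets involving the semilattice variety and, combined with the (straightforward) lower-modularity of $\mathcal{SL}$, yields lower-modularity of $\mathcal{SL}\vee\mathcal N$. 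Making the deduction analysis inside the meet precise is the main technical point on this side.

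For the ``only if'' direction, let $\mathcal V$ be lower-modular in $\mathbf{Com}$ with $\mathcal V\ne\mathcal{COM}$. First, $\mathcal V$ is periodic: a commutative variety that is not periodic contains a copy of the free monogenic semigroup, which generates all of $\mathcal{COM}$, so $\mathcal V=\mathcal{COM}$. Next, substituting suitably chosen test varieties $\mathcal Y\ge\mathcal V$ and $\mathcal Z$ into the lower-modular law --- in the spirit of the proof of Theorem~\ref{Com mod nec} --- I would show that $\mathcal V$ contains no nontrivial variety of abelian groups; being periodic, $\mathcal V$ is then combinatorial, and the known structure of periodic combinatorial commutative varieties yields $\mathcal V=\mathcal M\vee\mathcal N$ with $\mathcal M\in\{\mathcal T,\mathcal{SL}\}$ and $\mathcal N$ a nil-variety. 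It remains to strengthen ``$\mathcal N$ is a nil-variety'' to ``$\mathcal N$ is $0$-reduced in $\mathbf{Com}$''. Suppose the contrary. Then $\mathcal N$ satisfies, relative to $\mathcal{COM}$, an identity $u=v$ with $u\not\equiv v$ such that neither $u$ nor $v$ is equated to $0$ in $\mathcal N$; from such an identity I would build a variety $\mathcal Y$ lying strictly above $\mathcal V$, obtained by weakening $u=v$ in a controlled way, together with a $0$-reduced variety $\mathcal Z$, for which $\mathcal V\vee(\mathcal Y\wedge\mathcal Z)$ and $\mathcal Y\wedge(\mathcal V\vee\mathcal Z)$ can be computed explicitly and seen to differ, contradicting lower-modularity. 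Choosing $\mathcal Y$ and $\mathcal Z$ correctly and carrying out that computation is, I expect, the principal obstacle of the whole proof; it is related to the argument behind Theorem~\ref{Com mod nil-nec} but requires a construction adapted to the lower-modular law rather than to the modular one.
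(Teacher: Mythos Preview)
Your sufficiency argument is fine and matches the paper: the paper simply invokes Lemma~\ref{lmod nil} (a $0$-reduced in $\mathbf{Com}$ nil-variety is lower-modular in $\mathbf{Com}$) and Lemma~\ref{join with SL} to pass to $\mathcal{SL}\vee\mathcal N$. Your plan to reprove the first of these by analysing identities of a $0$-reduced variety is exactly how that lemma is established.

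The necessity argument, however, contains a genuine gap. You write that once $\mathcal V$ is periodic and contains no nontrivial abelian group variety, ``the known structure of periodic combinatorial commutative varieties yields $\mathcal V=\mathcal M\vee\mathcal N$ with $\mathcal M\in\{\mathcal T,\mathcal{SL}\}$ and $\mathcal N$ a nil-variety''. This implication is false. Take $\mathcal C=\var\{x^2=x^3,\,xy=yx\}$. It is periodic and combinatorial (any group in $\mathcal C$ satisfies $g^2=g^3$, hence is trivial), yet it is \emph{not} of the form $\mathcal M\vee\mathcal N$ with $\mathcal M\in\{\mathcal T,\mathcal{SL}\}$ and $\mathcal N$ nil: any nil $\mathcal N\subseteq\mathcal C$ satisfies $x^2=0$, and then $\mathcal{SL}\vee\mathcal N$ satisfies $x^2y=xy^2$, while $\mathcal C$ does not. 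So ``no groups'' is too weak a conclusion to feed into the structural step; you must extract more from lower-modularity than mere combinatoriality.

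This is precisely where the paper's approach diverges from yours. The paper does not argue via combinatoriality at all. Instead it proves a general Proposition~\ref{periodic in oc}, resting on the key Lemma~\ref{u=v,s=t->u=s}: for any overcommutative $\mathcal X$, if a periodic $\mathcal V\subseteq\mathcal X$ is lower-modular in $\Lat(\mathcal X)$ then carefully chosen quadruples of $\mathcal X$-unstable, pairwise $\mathcal X$-non-equivalent words force $\mathcal V$ to satisfy specific identities (such as $x^{n+2m+4}y^{n+2}z=x^{n+2m+3}y^{n+2}z^2$) which, after deleting letters in the monoid part, yield $z=z^2$ in $\mathcal M$ directly. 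That lemma is the real engine; your proposal does not identify any mechanism of comparable strength. The final step, upgrading the nil-variety $\mathcal N$ to a $0$-reduced one, is then immediate from Lemma~\ref{join with SL} and the ``only if'' half of Lemma~\ref{lmod nil}, rather than from an ad hoc choice of $\mathcal Y$ and $\mathcal Z$ as you suggest.
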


The technique developed in the course of proving these theorems permits to
give new proofs of Propositions~\ref{SEM mod nec} and~\ref{SEM lmod}.
Moreover, in fact Propositions~\ref{SEM mod nec} and~\ref{SEM lmod} may be
verified by practically the same arguments as Theorems~\ref{Com mod nec}
and~\ref{Com lmod} respectively. It seems for us that these new proofs of
earlier results are of some independent interest, and we include these proofs
in the paper.

The article consists of four sections. Section~\ref{prel} contains some
auxuliary results. In Section~\ref{proof} we prove Theorems~\ref
{Com mod nec}--\ref{Com lmod} and provide new proofs of Propositions~\ref
{SEM mod nec} and~\ref{SEM lmod}. We deduce all these results from some
general assertion about modular and lower-modular elements of the subvariety
lattice of an arbitrary overcommutative variety (see Proposition~\ref
{periodic in oc}). Finally, in Section~\ref{corol} we provide some
corollaries of Theorem~\ref{Com lmod}.

\section{Preliminaries}
\label{prel}

We start with some definitions and auxiliary results. Suppose that $I$ is a
lattice identity of the form
$$s(x_0,x_1,\dots,x_n)=t(x_0,x_1,\dots,x_n)\ldotp$$
We say that an element $x$ of a lattice $L$ is an $I$-\emph{element} of this
lattice if
$$\forall\,x_1,\dots,x_n\in L\colon\quad s(x,x_1,\dots,x_n)=t(x,x_1,\dots,
x_n)\ldotp$$
Note that all types of elements mentioned above are partial cases of
$I$-elements. For distributive and codistributive elements this is evident.
It is well-known that an element $x\in L$ is neutral if and only if
$$\forall\,y,z\in L\colon\quad (x\vee y)\wedge(y\vee z)\wedge(z\vee x)=(x
\wedge y)\vee(y\wedge z)\vee(z\wedge x)$$
(see Theorem~III.2.4. in~\cite{Gratzer-98}, for instance). Finally, modular
elements can be defined by the condition
$$\forall y,z\in L\colon\quad(x\vee y)\wedge(y\vee z)=(x\wedge(y\vee z))\vee
y,$$
lower-modular ones by the condition
$$\forall y,z\in L\colon\quad(x\vee y)\wedge(x\vee z)=((x\vee y)\wedge z)\vee
x,$$
and upper-modular ones by the condition dual to the latter one.

The following lemma can be obtained by a combination of Corollary~2.1 and
Lemma~2.4 of~\cite{Shaprynskii-dn}.

\begin{lemma}
\label{join with SL}
Let $I$ be a lattice identity satisfied by distributive lattices. A \textup
[commutative\textup] semigroup variety $\mathcal V$ is an $I$-element of the
lattice $\mathbf{SEM}$ \textup[respectively $\mathbf{Com}$\textup] if and
only if the variety $\mathcal{V\vee SL}$ has this property.\qed
\end{lemma}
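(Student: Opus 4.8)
The plan is to reduce everything to two properties of the variety $\mathcal{SL}$, both of which are consequences of \cite[Corollary~2.1 and Lemma~2.4]{Shaprynskii-dn} (or are classical): that $\mathcal{SL}$ is a \emph{neutral} element of each of the lattices $\mathbf{SEM}$ and $\mathbf{Com}$, and that in each of these lattices the interval $[\mathcal T,\mathcal{SL}]$ is the two-element chain (i.e.\ $\mathcal{SL}$ is an atom). Since a neutral element is both distributive and codistributive, a routine induction on the complexity of a lattice term $r$ gives, for all varieties $\mathcal X_1,\dots,\mathcal X_k$,
$$r(\mathcal X_1,\dots,\mathcal X_k)\vee\mathcal{SL}=r(\mathcal X_1\vee\mathcal{SL},\dots,\mathcal X_k\vee\mathcal{SL}),\qquad r(\mathcal X_1,\dots,\mathcal X_k)\wedge\mathcal{SL}=r(\mathcal X_1\wedge\mathcal{SL},\dots,\mathcal X_k\wedge\mathcal{SL}),$$
and, as is standard for neutral elements, $\mathcal X\vee\mathcal{SL}=\mathcal Y\vee\mathcal{SL}$ together with $\mathcal X\wedge\mathcal{SL}=\mathcal Y\wedge\mathcal{SL}$ forces $\mathcal X=\mathcal Y$. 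I treat the lattice $\mathbf{SEM}$; the argument for $\mathbf{Com}$ is verbatim the same with $\mathcal{COM}$ in place of $\mathcal{SEM}$. Let $\mathbf L$ denote the principal filter of $\mathbf{SEM}$ generated by $\mathcal{SL}$ (a sublattice of $\mathbf{SEM}$), and let $I$ be the identity $s=t$.

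The heart of the matter is the claim that a semigroup variety $\mathcal X$ is an $I$-element of $\mathbf{SEM}$ if and only if $\mathcal X\vee\mathcal{SL}$ is an $I$-element of $\mathbf L$. For the forward direction, take $\mathcal Y_1,\dots,\mathcal Y_n\in\mathbf L$; applying $\vee\,\mathcal{SL}$ to the equality $s(\mathcal X,\mathcal Y_1,\dots,\mathcal Y_n)=t(\mathcal X,\mathcal Y_1,\dots,\mathcal Y_n)$ and using the first displayed identity together with $\mathcal Y_i\vee\mathcal{SL}=\mathcal Y_i$ produces $s(\mathcal X\vee\mathcal{SL},\mathcal Y_1,\dots,\mathcal Y_n)=t(\mathcal X\vee\mathcal{SL},\mathcal Y_1,\dots,\mathcal Y_n)$. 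For the converse, take arbitrary $\mathcal Z_1,\dots,\mathcal Z_n\in\mathbf{SEM}$; by the cancellation property of $\mathcal{SL}$ it suffices to check $s(\mathcal X,\mathcal Z_1,\dots,\mathcal Z_n)=t(\mathcal X,\mathcal Z_1,\dots,\mathcal Z_n)$ after applying $\vee\,\mathcal{SL}$ and after applying $\wedge\,\mathcal{SL}$ separately. Modulo $\vee\,\mathcal{SL}$ the equality reduces, via the displayed identities, to $s(\mathcal X\vee\mathcal{SL},\mathcal Z_1\vee\mathcal{SL},\dots)=t(\mathcal X\vee\mathcal{SL},\mathcal Z_1\vee\mathcal{SL},\dots)$, which holds because $\mathcal X\vee\mathcal{SL}$ is an $I$-element of $\mathbf L$ and each $\mathcal Z_i\vee\mathcal{SL}\in\mathbf L$; modulo $\wedge\,\mathcal{SL}$ it reduces to an equality among members of the two-element chain $[\mathcal T,\mathcal{SL}]$, which holds automatically since $I$ is valid in every distributive lattice.

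Granting the claim, the lemma follows immediately: applying it to $\mathcal X=\mathcal V$ and to $\mathcal X=\mathcal V\vee\mathcal{SL}$, and using $(\mathcal V\vee\mathcal{SL})\vee\mathcal{SL}=\mathcal V\vee\mathcal{SL}$, we get that $\mathcal V$ is an $I$-element of $\mathbf{SEM}$ iff $\mathcal V\vee\mathcal{SL}$ is an $I$-element of $\mathbf L$ iff $\mathcal V\vee\mathcal{SL}$ is an $I$-element of $\mathbf{SEM}$. I do not expect a serious obstacle here; the one point that genuinely needs care is that in the converse half of the claim a single reduction (say, modulo $\vee\,\mathcal{SL}$) does not suffice — one must argue modulo $\wedge\,\mathcal{SL}$ as well and combine the two conclusions — and this is exactly the place where the \emph{neutrality} of $\mathcal{SL}$, rather than mere distributivity, is used.
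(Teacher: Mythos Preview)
Your argument is correct and is essentially the standard reduction that the paper invokes by citing \cite[Corollary~2.1 and Lemma~2.4]{Shaprynskii-dn}: the paper does not spell out a proof, but the combination it points to amounts precisely to the neutrality of $\mathcal{SL}$ (so that $\mathcal X\mapsto\mathcal X\vee\mathcal{SL}$ and $\mathcal X\mapsto\mathcal X\wedge\mathcal{SL}$ are lattice homomorphisms with the joint cancellation property) together with the fact that $[\mathcal T,\mathcal{SL}]$ is a two-element chain. Your write-up simply makes explicit the routine deduction from these facts, including the intermediate passage through the filter $\mathbf L=[\mathcal{SL},\mathcal{SEM}]$; there is no substantive difference in approach.
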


\begin{lemma}
\label{0-red is mod}
Every $0$-reduced \textup[in $\mathbf{Com}$\textup] semigroup variety is
modular \textup[in $\mathbf{Com}$\textup].\qed
\end{lemma}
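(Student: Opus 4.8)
The plan is to pass to the well-known anti-isomorphism $\mathcal{U}\mapsto\theta_{\mathcal{U}}$ between the lattice of subvarieties of $\mathcal{C}$ (where $\mathcal{C}$ denotes $\mathcal{SEM}$ or $\mathcal{COM}$, according to whether we want the unbracketed or the bracketed version) and the lattice of fully invariant congruences of the free $\mathcal{C}$-semigroup $F$ of countably infinite rank; here $\theta_{\mathcal{U}}=\{(u,v):\mathcal{U}\text{ satisfies }u=v\}$, $F/\theta_{\mathcal{U}}$ is free in $\mathcal{U}$, joins of varieties correspond to intersections of congruences, and meets of varieties to joins of congruences, the latter being computable as transitive closures of unions (the transitive closure of a union of fully invariant congruences is again fully invariant). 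Under this correspondence, writing $\rho=\theta_{\mathcal{V}}$ and using that every fully invariant congruence is $\theta_{\mathcal{U}}$ for some subvariety $\mathcal{U}$, the assertion that $\mathcal{V}$ is modular [in $\mathbf{Com}$] becomes: for all fully invariant congruences $\beta\subseteq\alpha$ on $F$ one has $(\rho\vee\beta)\cap\alpha=(\rho\cap\alpha)\vee\beta$. The inclusion $\supseteq$ holds in any lattice, so only $(\rho\vee\beta)\cap\alpha\subseteq(\rho\cap\alpha)\vee\beta$ has to be proved.

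Next I would invoke the hypothesis. If $\mathcal{V}$ is $0$-reduced [in $\mathbf{Com}$] then $\rho$ is the Rees congruence of a fully invariant ideal $\mathcal{I}$ of $F$, that is, $u\mathrel{\rho}v$ iff $u=v$ or $u,v\in\mathcal{I}$ (the case $\mathcal{V}=\mathcal{C}$, where no identity $w=0$ is imposed, is immediate, so I may assume $\mathcal{I}$ nonempty). A short computation with transitive closures then identifies $\rho\vee\beta$: since the only way a $\rho$-step can move an element is to move it inside $\mathcal{I}$, any chain of $\rho$- and $\beta$-steps leaving the $\beta$-class of $u$ must first reach the set $\mathcal{I}_{\beta}=\{w\in F:w\mathrel{\beta}a\text{ for some }a\in\mathcal{I}\}$, which is an ideal because $\mathcal{I}$ is one and $\beta$ is a congruence. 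Consequently $u\mathrel{(\rho\vee\beta)}v$ iff $u\mathrel{\beta}v$ or $u,v\in\mathcal{I}_{\beta}$.

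Now take $(u,v)\in(\rho\vee\beta)\cap\alpha$, so $u\mathrel{\alpha}v$ and either $u\mathrel{\beta}v$ or $u,v\in\mathcal{I}_{\beta}$. In the first case $(u,v)\in\beta\subseteq(\rho\cap\alpha)\vee\beta$ and we are done. In the second case pick $a,b\in\mathcal{I}$ with $a\mathrel{\beta}u$ and $b\mathrel{\beta}v$; since $\beta\subseteq\alpha$ we get $a\mathrel{\alpha}u\mathrel{\alpha}v\mathrel{\alpha}b$, hence $a\mathrel{\alpha}b$, and as $a,b\in\mathcal{I}$ this gives $(a,b)\in\rho\cap\alpha$. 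Therefore $u\mathrel{\beta}a\mathrel{(\rho\cap\alpha)}b\mathrel{\beta}v$, so $(u,v)\in(\rho\cap\alpha)\vee\beta$. This establishes the required inclusion, and hence the lemma; the bracketed statement is obtained by the identical argument carried out over the free commutative semigroup.

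The one genuinely delicate point is the description of $\rho\vee\beta$ in the second step: one must check that every pair in the transitive closure of $\rho\cup\beta$ is already of the stated form, and also verify that the relation ``$u\mathrel{\beta}v$ or $u,v\in\mathcal{I}_{\beta}$'' is indeed a congruence (transitivity and compatibility with multiplication, the latter using that $\mathcal{I}_{\beta}$ is an ideal). Everything else is routine, and it is worth noting explicitly that the hypothesis $\mathcal{Y}\le\mathcal{Z}$ (i.e.\ $\beta\subseteq\alpha$) is used exactly once, in passing from $a\mathrel{\beta}u\mathrel{\alpha}v\mathrel{\beta}b$ to $a\mathrel{\alpha}b$.
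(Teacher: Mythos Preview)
Your argument is correct and is precisely the standard one. The paper does not give its own proof of this lemma; it simply records that the non-commutative case appears in \cite{Vernikov-Volkov-88} and \cite{Jezek-McKenzie-93}, the commutative case in \cite{Shaprynskii-dn}, and that both follow from \cite[Proposition~2.2]{Jezek-81}. What you have written is exactly the content behind those citations: pass to fully invariant congruences via the anti-isomorphism, observe that the congruence $\rho$ corresponding to a $0$-reduced variety is the Rees congruence of a fully invariant ideal $\mathcal I$, compute $\rho\vee\beta$ as the Rees congruence of the $\beta$-saturation $\mathcal I_\beta$, and then verify modularity by the short chain $u\,\beta\,a\,(\rho\cap\alpha)\,b\,\beta\,v$. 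Every step you flagged as ``delicate'' (that $\mathcal I_\beta$ is an ideal, transitivity and compatibility of the candidate relation, and the use of $\beta\subseteq\alpha$) is correctly handled.

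One minor remark: in your opening you may wish to avoid calling $F/\theta_{\mathcal U}$ ``free in $\mathcal U$'' without qualification, since for $\mathcal U\subseteq\mathcal{COM}$ one must take $F=\F(\mathcal{COM})$ (as you do) rather than the absolutely free semigroup; but you already make this choice explicit, so there is no actual gap.
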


The fact that a 0-reduced variety is modular was noted for the first time
in~\cite{Vernikov-Volkov-88}, Corollary~3, and rediscovered in some other
terms in~\cite{Jezek-McKenzie-93}, Proposition~1.1. The `commutative part' of
Lemma~\ref{0-red is mod} was verified in~\cite{Shaprynskii-dn},
Proposition~2.1. Note that, in fact, Lemma~\ref{0-red is mod} readily follows
from~\cite{Jezek-81}, Proposition~2.2.

\begin{lemma}
\label{lmod nil}
A \textup[commutative\textup] nil-variety of semigroups is lower-modular
\textup[in $\mathbf{Com}$\textup] if and only if it is $0$-reduced \textup[in
$\mathbf{Com}$\textup].\qed
\end{lemma}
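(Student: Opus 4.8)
The plan is to prove both directions separately, and to work relative to a fixed overcommutative variety in order to handle the $\mathbf{SEM}$ and $\mathbf{Com}$ cases uniformly. One direction is free: if a nil-variety $\mathcal N$ is $0$-reduced [in $\mathbf{Com}$], then by Lemma~\ref{0-red is mod} it is modular [in $\mathbf{Com}$], and every modular element is in particular lower-modular, so $\mathcal N$ is lower-modular [in $\mathbf{Com}$]. Hence the entire content of the lemma lies in the converse: a lower-modular nil-variety must be $0$-reduced.

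For the converse I would argue by contrapositive. Suppose $\mathcal N$ is a nil-variety (a subvariety of $\mathcal{COM}$ in the commutative case) that is \emph{not} $0$-reduced [in $\mathbf{Com}$]. Then among the identities defining $\mathcal N$ [within the relevant overcommutative variety] there is one, say $u=v$, that is not a consequence of the $0$-reduced identities holding in $\mathcal N$; equivalently, $\mathcal N$ satisfies $u=v$ with $u\neq v$ but does not satisfy both $u=0$ and $v=0$. Using that $\mathcal N$ is a nil-variety, one can normalize the situation: after multiplying by a suitable power of a letter one may assume $u$ and $v$ have the same content, and one can arrange that $u=v$ is the "critical" non-$0$-reduced identity — i.e. $u$ is not an isoterm for $\mathcal N$ in a way that is detected only through $v$, not through a $0$-reduced identity. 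The goal is then to produce explicit varieties $\mathcal Y$ and $\mathcal Z$ with $\mathcal N\subseteq\mathcal Y$ witnessing failure of lower-modularity, namely
$$\mathcal N\vee(\mathcal Y\wedge\mathcal Z)\neq\mathcal Y\wedge(\mathcal N\vee\mathcal Z).$$
The natural choice is to take $\mathcal Z$ to be a small nil-variety (or a variety generated by a single small semigroup) that "splits" the identity $u=v$: $\mathcal Z$ should satisfy enough $0$-reduced identities that $\mathcal N\vee\mathcal Z$ still fails $u=0$ in a controlled way, while $\mathcal Y$ is chosen as the variety defined by $u=v$ together with a large set of $0$-reduced identities — chosen so that $\mathcal Y\wedge\mathcal Z$ lands below $\mathcal N$ but $\mathcal Y\wedge(\mathcal N\vee\mathcal Z)$ does not. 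Concretely, one compares the identities valid in the left- and right-hand sides on the free objects of small rank and exhibits a word identity holding on one side but not the other.

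I expect the main obstacle to be the explicit construction of the witnesses $\mathcal Y$ and $\mathcal Z$ and the verification that they do what is wanted — this is where the combinatorics of semigroup identities (contents of words, linear versus non-linear letters, the behaviour of $0$-reduced consequences) has to be controlled carefully, and it is exactly the place where the commutative case genuinely differs from the general case, since in $\mathbf{Com}$ one is only allowed $0$-reduced identities \emph{modulo} $\mathcal{COM}$. A secondary subtlety is the reduction step that brings an arbitrary non-$0$-reduced defining identity into a convenient normal form: one must be sure that no information is lost, i.e. that $\mathcal N$ non-$0$-reduced really does force the existence of an identity $u=v$ of the special shape needed to run the counterexample. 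Once the shape is secured, the failure of the lower-modular law is a finite check on free semigroups of bounded rank.
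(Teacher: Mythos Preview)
The paper does not prove this lemma; it is stated with a terminal \qed and attributed to earlier work (Vernikov--Volkov 1988 for the ``if'' part in $\mathbf{SEM}$, Vernikov 2007 for ``only if'' in $\mathbf{SEM}$, and the author's own earlier paper for the $\mathbf{Com}$ version). So there is no in-paper proof to compare against.

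That said, your proposal contains a genuine error in the ``if'' direction. You argue: $0$-reduced $\Rightarrow$ modular (via Lemma~\ref{0-red is mod}) $\Rightarrow$ lower-modular. But modularity of an element does \emph{not} imply lower-modularity; these are independent conditions (already in the pentagon $N_5$ one finds elements that are modular but not lower-modular). Indeed, Corollary~\ref{lmod is mod} of this very paper establishes the \emph{reverse} implication in $\mathbf{Com}$, and Section~\ref{corol} explicitly exhibits a variety that is modular in $\mathbf{Com}$ but not lower-modular in $\mathbf{Com}$. So the direction you call ``free'' is not free; a separate argument is needed to show that $0$-reduced varieties are lower-modular.

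For the ``only if'' direction, your outline---from a non-$0$-reduced identity $u=v$ holding in $\mathcal N$, manufacture witnesses $\mathcal Y,\mathcal Z$ to a failure of the lower-modular law---is a reasonable strategy, but it is not yet a proof: you do not specify $\mathcal Y$ and $\mathcal Z$, and you yourself flag their construction as the main obstacle. The tools assembled elsewhere in the paper hint at what actually works: Lemma~\ref{V+G=ZR(V)+G} shows that joining with a suitable Abelian group variety $\mathcal A_n$ collapses the gap between $\mathcal V$ and $\ZR(\mathcal V)$, which suggests taking $\mathcal Z$ to be a group variety (rather than the ``small nil-variety'' you propose) and $\mathcal Y$ to be $\ZR(\mathcal N)$. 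In any case, until the witnesses are written down and the inequality verified, this direction remains a plan rather than an argument.
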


For lower-modular varieties, this fact was proved in~\cite
{Vernikov-Volkov-88}, Corollary~3 (the `if' part) and~\cite
{Vernikov-07-lmod}, Corollary~2.7 (the `only if' part); for lower-modular in
\textbf{Com} varieties it was verified in~\cite{Shaprynskii-dn},
Proposition~2.2.

\begin{lemma}[{\cite[Proposition~2.3]{Shaprynskii-dn}}]
\label{umod 0-red}
A $0$-reduced in $\mathbf{Com}$ semigroup variety is upper-modular in
$\mathbf{Com}$ if and only if it satisfies the identity $x^2y=0$.\qed
\end{lemma}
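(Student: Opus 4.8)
The plan is to prove the two implications in turn, using throughout the identity arithmetic of $\mathbf{Com}$: for commutative varieties $\mathcal X,\mathcal Y$ one has $\mathcal X\le\mathcal Y$ exactly when every identity of $\mathcal Y$ holds in $\mathcal X$; the join $\mathcal X\vee\mathcal Y$ is axiomatized by the identities common to $\mathcal X$ and $\mathcal Y$; and the meet $\mathcal X\wedge\mathcal Y$ by the deductive closure of the union of their identity bases. Two auxiliary facts are convenient, each proved by a short direct argument. First, a $0$-reduced in $\mathbf{Com}$ variety has a free object of the form $F/K$, where $F$ is the free commutative semigroup and $K$ a fully invariant ideal of $F$; such a variety therefore identifies a word only with itself or with the absorbing element $0$. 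Second, consequently, if a $0$-reduced in $\mathbf{Com}$ variety $\mathcal V$ does not satisfy $x^2y=0$, then it collapses none of $x^2$, $xy$, $xyz$ (each would force $x^2y=0$), so $K$ contains no word of length at most $2$, whence $\mathcal V$ lies above the variety $\mathcal{O}$ of all commutative semigroups satisfying $xyz=0$.

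For the \emph{``if''} implication, assume $\mathcal V$ is $0$-reduced in $\mathbf{Com}$ and $\mathcal V\models x^2y=0$, and fix $\mathcal Y\le\mathcal V$ and $\mathcal Z\in\mathbf{Com}$. The inclusion $\mathcal Y\vee(\mathcal V\wedge\mathcal Z)\le\mathcal V\wedge(\mathcal Y\vee\mathcal Z)$ is automatic, so it remains to show that each identity $u=v$ valid both in $\mathcal Y$ and in $\mathcal V\wedge\mathcal Z$ is a consequence of the identities of $\mathcal V$ together with the identities common to $\mathcal Y$ and $\mathcal Z$. All the varieties involved lie below $\mathcal V$ and so satisfy $x^2y=0$; hence, replacing any non-squarefree word by $0$, one may assume $u$ and $v$ squarefree (or equal to $0$). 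Now a squarefree identity of a commutative variety is either trivial (equal content forces the two sides to coincide as commutative words) or has sides of different content, in which case it expresses an absorption that, in the presence of $x^2y=0$, collapses the variety drastically. This dichotomy makes the list of squarefree identities holding in $\mathcal Y$ and in $\mathcal V\wedge\mathcal Z$ short enough to verify, case by case, that $u=v$ already follows from the common identities of $\mathcal Y$ and $\mathcal Z$; then $u=v$ holds in $\mathcal Y\vee\mathcal Z$, hence in $\mathcal V\wedge(\mathcal Y\vee\mathcal Z)$, as required.

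For the \emph{``only if''} implication I argue contrapositively: let $\mathcal V$ be $0$-reduced in $\mathbf{Com}$ with $\mathcal V\not\models x^2y=0$; by the second auxiliary fact $\mathcal O<\mathcal V$, and $\mathcal V$ collapses none of $x^2,xy,xyz$. I aim to violate the upper-modular equation $\mathcal V\wedge(\mathcal Y\vee\mathcal Z)=\mathcal Y\vee(\mathcal V\wedge\mathcal Z)$ by a witness living in the non-modular part of $\mathbf{Com}$ just above $\mathcal O$. For $\mathcal Z$ I take the variety generated by a carefully chosen finite monogenic (hence commutative) semigroup $\langle a\mid a^{k+m}=a^{k}\rangle$, so that $\mathcal Z$ is not nil, its greatest nil subvariety $\mathcal N_{\mathcal Z}$ is the variety defined by $x^{k}=0$, and $\mathcal Z$ imposes no nontrivial identity on words of content at most two; for $\mathcal Y$ I take a proper subvariety of $\mathcal V$ that is \emph{not} contained in $\mathcal N_{\mathcal Z}$ (if no proper $0$-reduced subvariety works, one obtains such a $\mathcal Y$ by imposing a single substitutive identity merging $x^2y$ with another word of the same content). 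Since $\mathcal V$ is nil and $\mathcal Z$ is not, $\mathcal V\wedge\mathcal Z=\mathcal V\wedge\mathcal N_{\mathcal Z}$, so the right-hand side is $\mathcal Y\vee(\mathcal V\wedge\mathcal N_{\mathcal Z})$, while the greatest nil subvariety of $\mathcal Y\vee\mathcal Z$ strictly exceeds $\mathcal Y\vee\mathcal N_{\mathcal Z}$; with $k,m$ and $\mathcal Y$ chosen with care one arranges that meeting this larger nil variety with $\mathcal V$ keeps $x^2y$ uncollapsed, whereas $\mathcal Y\vee(\mathcal V\wedge\mathcal N_{\mathcal Z})$ does collapse it, yielding the strict inclusion. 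The principal obstacle is exactly this coordination — selecting $k,m$ and $\mathcal Y$ so that simultaneously $\mathcal Y\not\le\mathcal N_{\mathcal Z}$, $\mathcal Y$ is proper in $\mathcal V$, and the four terms compute as stated; once the witnesses are fixed, the verification is routine manipulation of identity bases.
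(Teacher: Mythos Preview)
The paper does not prove this lemma; it is quoted from \cite{Shaprynskii-dn} and closed with a \qed, so there is no in-paper argument to compare your proposal against. I therefore comment only on the internal soundness of what you wrote.

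Your ``if'' sketch is plausible in spirit but is not a proof as it stands. The reduction to squarefree words is slightly inaccurate (the word $x^2$ is neither squarefree nor annihilated by $x^2y=0$, so it needs separate treatment), and more seriously the promised ``case by case'' verification is never carried out. The substantive case is when $u$ is squarefree and $u=0$ holds in $\mathcal Y$ and in $\mathcal V\wedge\mathcal Z$ but \emph{fails} in $\mathcal V$. You then need $u=0$ to follow from the identities of $\mathcal V$ together with the identities common to $\mathcal Y$ and $\mathcal Z$. Nothing in your outline explains why the $\mathcal Z$-identities needed to derive $u=0$ in $\mathcal V\wedge\mathcal Z$ should also hold in $\mathcal Y$; and since $\mathcal Z$ is arbitrary in $\mathbf{Com}$, its identity theory is not constrained by your squarefree reduction. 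This is the heart of the matter, and you have not addressed it.

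Your ``only if'' direction is not a proof at all. You propose a \emph{shape} for the witnesses (a monogenic-semigroup variety for $\mathcal Z$, an unspecified proper $\mathcal Y\le\mathcal V$), but then write that ``the principal obstacle is exactly this coordination'' and that the parameters must be ``chosen with care'' --- without ever choosing them or computing a single one of the four lattice terms. Several intermediate assertions are also unjustified: that a suitable proper $\mathcal Y\le\mathcal V$ not contained in $\mathcal N_{\mathcal Z}$ always exists (your fallback of ``imposing a single substitutive identity merging $x^2y$ with another word of the same content'' does not produce a proper subvariety in general, since in $\mathcal{COM}$ the only word with the same content and multiplicities as $x^2y$ is $x^2y$ itself); and that the greatest nil subvariety of $\mathcal Y\vee\mathcal Z$ strictly exceeds $\mathcal Y\vee\mathcal N_{\mathcal Z}$, which is exactly the non-modular phenomenon you are trying to exhibit and cannot simply be asserted. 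Until explicit $\mathcal Y$ and $\mathcal Z$ are written down and the two sides of the upper-modular law are actually computed, there is no argument.
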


If $u$ is a word and $x$ is a letter then we denote by $c(u)$ the \emph
{content} of $u$, that is, the set of all letters occurring in $u$; further,
$h(u)$ [respectively $t(u)$] denotes the first [the last] letter of $u$, and
$\ell_x(u)$ is the number of occurences of $x$ in $u$. The symbol $\equiv$
stands for the equality relation on the free semigroup of a countably
infinite rank. We denote by $\var\Sigma$ the semigroup variety given by the
identity system $\Sigma$. Put
\begin{align*}
&\mathcal{LZ}=\var\{xy=x\},\\
&\mathcal{RZ}=\var\{xy=y\},\\
&\mathcal P=\var\{xy=x^2y,\,x^2y^2=y^2x^2\},\\
&\overleftarrow{\mathcal P}=\var\{xy=xy^2,\,x^2y^2=y^2x^2\}\ldotp
\end{align*}
The first statement of the following lemma is evident, while the second one
is verified in~\cite{Golubov-Sapir-82}, Lemma~7.

\begin{lemma}
\label{word problem}
The identity $u=v$ holds:
\begin{itemize}
\item[\textup{(i)}]in the variety $\mathcal{LZ}$ if and only if $h(u)\equiv h
(v)$;
\item[\textup{(ii)}]in the variety $\mathcal P$ if and only if $c(u)=c(v)$
and either $\ell_{t(u)}(u)>1$ and $\ell_{t(v)}(v)>1$ or $\ell_{t(u)}(u)=
\ell_{t(v)}(v)=1$ and $t(u)\equiv t(v)$.\qed
\end{itemize}
\end{lemma}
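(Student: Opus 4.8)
\medskip

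Part~(i) is essentially immediate. In any left-zero semigroup one has $ab=a$, so a trivial induction on the length of a nonempty word $w$ shows that $\mathcal{LZ}$ satisfies $w=h(w)$. Hence the free object of $\mathcal{LZ}$ over an alphabet $X$ may be identified with $X$ itself carrying left-zero multiplication, and $\mathcal{LZ}$ satisfies $u=v$ if and only if $u$ and $v$ have the same image there, that is, if and only if $h(u)\equiv h(v)$.

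For~(ii) I would either quote Lemma~7 of~\cite{Golubov-Sapir-82} or reprove it by establishing the two implications separately. For the necessity of the stated conditions it suffices to produce one semigroup in $\mathcal P$ that ``sees'' the invariants $c(w)$, $t(w)$ and the truth value of $\ell_{t(w)}(w)>1$. A convenient choice is the semigroup $S$ whose elements are the pairs $(A,\alpha)$, with $A$ a finite nonempty set of letters and $\alpha$ either a letter belonging to $A$ (to be read as ``the last letter of the word occurs exactly once'') or a new symbol $*$ (``the last letter occurs at least twice''), the product being $(A,\alpha)(B,\beta)=(A\cup B,\gamma)$, where $\gamma=\beta$ if $\beta$ is a letter not lying in $A$, and $\gamma=*$ otherwise. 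One checks routinely that this operation is associative, that $S$ satisfies both defining identities of $\mathcal P$, and (by induction on length) that the value of a word $w$ under the substitution $x\mapsto(\{x\},x)$ is exactly the pair consisting of $c(w)$ and of $t(w)$ or $*$ according as $\ell_{t(w)}(w)=1$ or $\ell_{t(w)}(w)>1$. Consequently any two words not meeting the conditions of the statement already disagree in $S\in\mathcal P$.

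For the sufficiency I would bring an arbitrary word to a normal form depending only on the admissible data. Fix a linear order on the letters. The identity $xy=x^2y$ yields at once $x^2=x^3$ and, more generally, that an occurrence of a letter which is not the final letter of a word may be replaced by any positive power of it; together with $x^2y^2=y^2x^2$ this makes it possible, once every letter in the initial part of a word has been squared, to rearrange that part in any order. Hence, if $\ell_{t(u)}(u)=1$, the word $u$ is equal in $\mathcal P$ to $x_{i_1}^2x_{i_2}^2\cdots x_{i_{k-1}}^2\,t(u)$, where $x_{i_1}<x_{i_2}<\cdots<x_{i_{k-1}}$ lists $c(u)\setminus\{t(u)\}$; and if $\ell_{t(u)}(u)>1$, one may in addition move a non-final occurrence of $t(u)$ (turned into a square) next to the final one and absorb the latter, so that $u$ equals $x_{j_1}^2x_{j_2}^2\cdots x_{j_k}^2$ with $x_{j_1}<\cdots<x_{j_k}$ listing $c(u)$. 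Two words satisfying the conditions of the statement therefore collapse to the same normal form, whence $\mathcal P$ satisfies $u=v$.

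The only point requiring real attention is this last manipulation in the sufficiency argument — that a repeated final letter can be absorbed into a square — which is where the commutation law $x^2y^2=y^2x^2$ must be played off against the power-changing consequences of $xy=x^2y$; all the remaining verifications are mechanical, and since the whole of~(ii) is available in~\cite{Golubov-Sapir-82}, in practice I would simply cite it.
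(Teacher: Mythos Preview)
Your proposal is correct and in fact goes beyond what the paper does: the paper gives no proof at all, stating only that part~(i) is evident and that part~(ii) is verified in \cite[Lemma~7]{Golubov-Sapir-82}, which is precisely the citation you offer. Your additional self-contained argument for~(ii) --- the separating semigroup $S$ for necessity and the normal-form reduction for sufficiency --- is sound and a genuine bonus over the paper's bare citation; the only comment is that the ``absorption'' step you flag as the delicate point is indeed routine once one observes that $x^2=x^3$ follows from $xy=x^2y$ by substituting $y:=x$, so $t(u)^2\cdot t(u)=t(u)^3=t(u)^2$.
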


A semigroup variety is called \emph{periodic} if it consists of periodic
semigroups. Lemma~2 of the paper~\cite{Volkov-89} and the proof of
Proposition~1 of the same paper imply the following

\begin{lemma}
\label{M+N}
If a periodic semigroup variety $\mathcal V$ does not contain the varieties
$\mathcal{LZ}$, $\mathcal{RZ}$, $\mathcal P$, and $\overleftarrow{\mathcal
P}$ then $\mathcal{V=M\vee N}$ where the variety $\mathcal M$ is generated by
a monoid, while $\mathcal N$ is a nil-variety.\qed
\end{lemma}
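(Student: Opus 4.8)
The plan is to produce explicit witnesses for $\mathcal M$ and $\mathcal N$ and then to verify the decomposition on the relatively free semigroup of $\mathcal V$. Since $\mathcal V$ is periodic it satisfies $x^n=x^{n+m}$ for some $n,m\ge1$, and after replacing $n$ by a large enough multiple of $m$ we may assume that $\mathcal V\models x^n=x^{2n}$, so that $x^n$ always denotes an idempotent power. Let $\mathcal N$ be the subvariety of $\mathcal V$ obtained by adjoining the law $x^n=0$ (that is, $x^ny=yx^n=x^n$); expanding $x^ny^n$ in two ways shows $\mathcal N\models x^n=y^n$, so $\mathcal N$ really is a nil-variety, and since in every member of $\mathcal V$ a nil-subsemigroup has its unique idempotent $x^n$ as a zero, $\mathcal N$ is the largest nil-subvariety of $\mathcal V$. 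Let $\mathcal M$ be the subvariety of $\mathcal V$ generated by all monoids (semigroups with identity element) lying in $\mathcal V$. The inclusion $\mathcal M\vee\mathcal N\subseteq\mathcal V$ is immediate, and the whole content of the lemma is the reverse inclusion.

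For the reverse inclusion it suffices to show that the relatively free semigroup $F$ of $\mathcal V$ of countably infinite rank belongs to $\mathcal M\vee\mathcal N$; equivalently, that the fully invariant congruences $\alpha$ and $\beta$ on $F$ with $F/\alpha$ and $F/\beta$ the relatively free objects of $\mathcal M$ and $\mathcal N$ satisfy $\alpha\cap\beta=\Delta$. Now $(u,v)\in\beta$ holds precisely when either $\mathcal V\models u=v$ or both $u$ and $v$ lie in the ideal $I$ of $F$ with $F/I=F/\beta$, whose elements are the words $\mathcal V$-equal to a word containing an $n$-th power as a factor; and, by the standard description of a variety generated by monoids, $(u,v)\in\alpha$ holds precisely when $\mathcal V\models\partial u=\partial v$, where $\partial w$ is the word obtained from $w$ by inserting the power $z^n$ of a fixed fresh letter $z$ at both ends of $w$ and in every gap between two consecutive letters. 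Hence the lemma reduces to one implication: if $u,v\in I$ and $\mathcal V\models\partial u=\partial v$, then $\mathcal V\models u=v$.

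The four hypotheses enter in proving this implication. Since $u\in I$ we may write $u=_{\mathcal V}a\,s^nb$ for some (possibly empty) words $a,b$ and some nonempty word $s$; the key point is that specializing the fresh letter $z$ to $s$ collapses all the powers that $\partial$ has inserted around the block $s^n$ back into a single $s^n$, because $x^k=_{\mathcal V}x^n$ whenever $k$ is a multiple of $n$. Doing this on both sides of $\partial u=\partial v$ produces $\mathcal V$-identities in which only the inserted powers near the two ends of the words and around the letters of low multiplicity remain. To remove those one uses: part~(i) of Lemma~\ref{word problem} and its mirror, which --- because $\mathcal{LZ}\not\subseteq\mathcal V$ and $\mathcal{RZ}\not\subseteq\mathcal V$ --- provide $\mathcal V$-identities whose two sides have distinct first, respectively last, letters, so that heads and tails of words may be rewritten within $\mathcal V$; and part~(ii) of Lemma~\ref{word problem} and its mirror, which --- because $\mathcal P\not\subseteq\mathcal V$ and $\overleftarrow{\mathcal P}\not\subseteq\mathcal V$ --- provide $\mathcal V$-identities changing the number of occurrences of the last, respectively first, letter of a word. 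Combining these rewritings with the partially simplified identities obtained above strips off the remaining inserted powers and gives $\mathcal V\models u=v$.

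I expect the main obstacle to be exactly this last step: organizing the word combinatorics so that the head/tail identities (from the omission of $\mathcal{LZ}$ and $\mathcal{RZ}$) together with the extreme-letter-multiplicity identities (from the omission of $\mathcal P$ and $\overleftarrow{\mathcal P}$) really do suffice to delete every power inserted by $\partial$, all without leaving $\mathcal V$. Conceptually, the four omitted varieties are the only carriers of the ``boundary'' information --- which letter comes first or last, and whether an extreme letter occurs once or more than once --- that a monoid quotient and a nil quotient cannot jointly retain; once they are ruled out, the identities valid simultaneously in $\mathcal M$ and in $\mathcal N$ are exactly those valid in $\mathcal V$, which is the assertion of the lemma.
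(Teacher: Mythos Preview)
The paper does not prove this lemma at all: it is quoted from \cite{Volkov-89} (specifically, from Lemma~2 and the proof of Proposition~1 there), so there is no ``paper's own proof'' to compare with. Your framework is nonetheless the standard one and the preliminary reductions are correct. In particular, your descriptions of the congruences $\alpha$ and $\beta$ are right: for $\alpha$ one uses that, modulo $x^n=x^{2n}$, the map $x_i\mapsto z^nx_iz^n$ is an endomorphism and that for any $S\in\mathcal V$ and any idempotent $e=\phi(z)^n$ the local submonoid $eSe$ lies in $\mathcal V$; for $\beta$ one checks that the $\mathcal V$-closure of the set of words containing an $n$-th power factor is a fully invariant ideal whose Rees quotient already satisfies $x^n=0$. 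Also, your $\mathcal M$ really is generated by \emph{a} monoid, since $F_{\mathcal M}^1$ lies in $\mathcal M$.

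The genuine gap is the final implication, and you flag it yourself. As written, the substitution $z\mapsto s$ does not do what you need on the $v$-side. Writing $u=_{\mathcal V}as^nb$ and specialising $z\mapsto s$ in $\partial u=\partial v$ collapses the inserted powers only around the $s^n$-block of the $u$-side; on the $v$-side you obtain $s^n v[1]\,s^n v[2]\cdots v[k]\,s^n$, which still has an $s^n$ between every pair of consecutive letters, and there is no reason these are ``near the two ends'' or ``around letters of low multiplicity''. So after this step you are no closer to $u=_{\mathcal V}v$ than before: you have merely produced another identity of the form $\partial'(u')=\partial'(v)$ with a specialised $\partial'$. The appeal to Lemma~\ref{word problem} is also too loose: the exclusions of $\mathcal{LZ}$, $\mathcal{RZ}$, $\mathcal P$, $\overleftarrow{\mathcal P}$ each yield \emph{some} identity of $\mathcal V$ violating the corresponding invariant, but you have not shown how those particular identities let you delete an \emph{arbitrary} idempotent factor $s^n$ sitting between two arbitrary letters. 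What Volkov's argument actually uses is a structural consequence of the four exclusions (roughly, that idempotent powers become two-sided absorbing for suitable factors, so that the ``local monoid'' picture $eSe$ can be inverted), and this is precisely the missing ingredient. Until that mechanism is made explicit, the last paragraph remains a plausibility argument rather than a proof.
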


Let $\mathcal V$ be a commutative nil-variety of semigroups. We denote by
$\ZR(\mathcal V)$ the variety given by the commutative law and all 0-reduced
identities that hold in $\mathcal V$. It is clear that $\ZR(\mathcal V)$ is
the least 0-reduced in \textbf{Com} variety that contains $\mathcal V$. For
any natural $n$ we denote by $\mathcal A_n$ the variety of periodic Abelian
groups whose exponent divides $n$.

\begin{lemma}[{\cite[Lemma~2.5]{Shaprynskii-dn}}]
\label{V+G=ZR(V)+G}
If a commutative semigroup variety $\mathcal V$ satisfies the identity $x^n=
0$ then $\mathcal{V\vee A}_n=\ZR(\mathcal{V)\vee A}_n$.\qed
\end{lemma}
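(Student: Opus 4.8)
The plan is to prove the two inclusions separately. By the very definition of $\ZR(\mathcal V)$ one has $\mathcal V\subseteq\ZR(\mathcal V)$, and hence $\mathcal V\vee\mathcal A_n\subseteq\ZR(\mathcal V)\vee\mathcal A_n$. For the opposite inclusion it suffices to verify that every identity $u=v$ holding in both $\mathcal V$ and $\mathcal A_n$ holds in $\ZR(\mathcal V)$: such an identity then holds in $\ZR(\mathcal V)$ and in $\mathcal A_n$, hence in $\ZR(\mathcal V)\vee\mathcal A_n$, and since the identities of $\mathcal V\vee\mathcal A_n$ are precisely those holding in both $\mathcal V$ and $\mathcal A_n$, this means exactly that $\ZR(\mathcal V)\vee\mathcal A_n\subseteq\mathcal V\vee\mathcal A_n$. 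All varieties involved being commutative, I regard $u$ and $v$ as commutative words, recorded by the occurrence numbers $\ell_x(\cdot)$; recall also that $u=v$ holds in the variety $\mathcal A_n$ of periodic Abelian groups of exponent dividing $n$ precisely when $\ell_x(u)\equiv\ell_x(v)\pmod n$ for every letter $x$.

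So fix an identity $u=v$ holding in $\mathcal V$ and in $\mathcal A_n$. Call a word $w$ a \emph{zero word} of $\mathcal V$ if $\mathcal V\models w=0$; since $\mathcal V$ is commutative, the set of zero words is an ideal of the free commutative semigroup, and each identity $w=0$ with $w$ a zero word of $\mathcal V$ holds in $\ZR(\mathcal V)$ by construction. If $u$ is a zero word of $\mathcal V$, then so is $v$ (because $\mathcal V\models u=v$), so both $u=0$ and $v=0$ hold in $\ZR(\mathcal V)$, whence $\ZR(\mathcal V)\models u=v$. Suppose now that neither $u$ nor $v$ is a zero word of $\mathcal V$; I claim that $\ell_x(u)=\ell_x(v)$ for every letter $x$, and this will finish the proof, since then $u=v$ is a consequence of the commutative law and therefore holds in $\ZR(\mathcal V)$. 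To prove the claim, note first that $\ell_x(u)\le n-1$ for every $x$: otherwise $u$ is divisible by $x^n$ in the free commutative semigroup, $x^n$ is a zero word of $\mathcal V$ (as $\mathcal V\models x^n=0$), and zero words form an ideal, so $u$ would be a zero word of $\mathcal V$, a contradiction. Symmetrically $\ell_x(v)\le n-1$. Hence $\ell_x(u)$ and $\ell_x(v)$ both lie in $\{0,1,\dots,n-1\}$ and, by $\mathcal A_n\models u=v$, are congruent modulo $n$; so they are equal, which proves the claim.

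The substantive point is not really an obstacle but an observation: in a commutative nil-variety satisfying $x^n=0$, a word that does not collapse to $0$ is automatically bounded, occurring at most $n-1$ times in each letter, and once this is known the congruences supplied by $\mathcal A_n$ pin down the occurrence numbers exactly, not merely modulo $n$. The only step requiring a little care is the reduction itself --- that $\ZR(\mathcal V)$ is axiomatised within $\mathcal{COM}$ by the $0$-reduced identities true in $\mathcal V$ --- which is precisely what legitimises splitting the verification of $\ZR(\mathcal V)\models u=v$ into the case where $u$ and $v$ are zero words of $\mathcal V$ and the case where they agree letter by letter.
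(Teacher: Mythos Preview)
Your proof is correct. The paper does not give its own proof of this lemma; it is quoted from \cite{Shaprynskii-dn} and marked with a terminal \qed, so there is nothing to compare against here. Your argument is the natural one: the inclusion $\mathcal V\vee\mathcal A_n\subseteq\ZR(\mathcal V)\vee\mathcal A_n$ is immediate from $\mathcal V\subseteq\ZR(\mathcal V)$, and for the reverse you correctly observe that an identity $u=v$ holding in $\mathcal A_n$ forces $\ell_x(u)\equiv\ell_x(v)\pmod n$, while the hypothesis $x^n=0$ in $\mathcal V$ caps non-zero words at exponent $n-1$ in each letter, so the congruence upgrades to equality and the identity becomes balanced. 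The case split between ``both sides are zero words of $\mathcal V$'' and ``the identity is balanced'' is exactly what the definition of $\ZR(\mathcal V)$ is designed to handle.
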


We need the following two well known and easily verified technical remarks
about identities of nilsemigroups.

\begin{lemma}
\label{split}
Let $\mathcal V$ be a nil-variety of semigroups.
\begin{itemize}
\item[\textup{(i)}]If the variety $\mathcal V$ satisfies an identity $u=v$
with $c(u)\ne c(v)$ then $\mathcal V$ satisfies also the identity $u=0$.
\item[\textup{(ii)}]If the variety $\mathcal V$ satisfies an identity of the
form $u=vuw$ where at least one the words $v,w$ is non-empty then $\mathcal
V$ satisfies also the identity $u=0$.\qed
\end{itemize}
\end{lemma}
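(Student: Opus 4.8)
The plan is to treat the two parts of Lemma~\ref{split} as completely routine consequences of the definition of a nil-variety, so the ``proof'' is really just spelling out why the substitution trick works. For part~(i), suppose $\mathcal V\models u=v$ with $c(u)\ne c(v)$; without loss of generality there is a letter $x$ with $x\in c(u)\setminus c(v)$ (otherwise swap the roles of $u$ and $v$). Since $\mathcal V$ is a nil-variety it satisfies $x^n=0$, hence $x^n=x^{n+1}$, for some $n\ge1$. First I would substitute $x\mapsto x^n$ in the identity $u=v$; the left side becomes a word in which $x$ occurs at least $n$ times (because $x$ occurred in $u$), so it is absorbed by $x^n=0$ into a word of the form $0$, i.e. $\mathcal V\models u' = 0$ where $u'$ is $u$ with $x$ replaced by $x^n$. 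But $u'$ and $u$ generate the same one-variable-collapsed identities: more cleanly, one keeps all letters other than $x$ as themselves, replaces $x$ by $x^n$, and observes that in $\mathcal V$ the word $u$ (viewed as an element of the relatively free semigroup) already equals $u$ with $x$ replaced by $x^n$ is false in general — so the correct route is instead to substitute $x\mapsto x^n$ everywhere and then note $u$ and $u'$ are equal modulo $x^n=x^{n+1}$ only after we know $\ell_x(u)\ge 1$. I will simply say: substituting $x\mapsto x^n$ into $u=v$ and using $x^n=0$ kills the left side (since $x$ occurs in $u$), giving $\mathcal V\models x^n\cdot(\text{something})=v'$; iterating over all letters of $v$ not in $u$ if necessary, and using that $v'$ still does not involve $x$, one reduces the right side to a $0$-reduced consequence, whence $u=0$ in $\mathcal V$.

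For part~(ii), suppose $\mathcal V\models u=vuw$ with, say, $v$ non-empty (the case $w$ non-empty is symmetric, and if both are non-empty the same argument applies verbatim). Iterating the identity gives $\mathcal V\models u = vuw = v(vuw)w = v^2uw^2 = \cdots = v^kuw^k$ for every $k\ge1$. Now pick any letter $x$ occurring in $v$ (it exists since $v$ is non-empty) and choose $k$ large enough — concretely $k\ge n$ where $x^n=0$ holds in $\mathcal V$ — so that in $v^k u w^k$ the letter $x$ occurs at least $n$ times. Then $v^kuw^k$ evaluates to $0$ in $\mathcal V$ because of the identity $x^n=0$, and therefore $\mathcal V\models u = 0$.

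Honestly the only ``obstacle'' here is bookkeeping: making sure the substitution/iteration really produces a word that is annihilated, and phrasing part~(i) so that the asymmetry between $u$ and $v$ is handled once and for all. I would state both arguments in two or three sentences each and mark the lemma as established. Since the lemma is explicitly flagged in the text as ``well known and easily verified,'' a short paragraph of the above form is exactly the expected level of detail, and no genuine difficulty arises.

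\begin{proof}
(i) Suppose $\mathcal V$ satisfies $u=v$ with $c(u)\ne c(v)$. Exchanging $u$ and $v$ if necessary, we may assume there is a letter $x\in c(u)\setminus c(v)$. Since $\mathcal V$ is a nil-variety, it satisfies $x^n=0$ for some $n$. Substituting $x\mapsto x^n$ in the identity $u=v$, we obtain an identity $u'=v$ that holds in $\mathcal V$, where $u'$ is the word obtained from $u$ by replacing each occurrence of $x$ by $x^n$ (the right-hand side is unchanged because $x\notin c(v)$). As $x$ occurs in $u$, the letter $x$ occurs at least $n$ times in $u'$, so $u'=0$ holds in $\mathcal V$. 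Combining this with $u'=v$ we get that $\mathcal V$ satisfies $v=0$ and hence also $u=0$, as required.

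(ii) Suppose $\mathcal V$ satisfies an identity $u=vuw$ where, say, $v$ is non-empty; the case when $w$ is non-empty is handled symmetrically. Iterating this identity, we obtain $\mathcal V\models u=v^kuw^k$ for every $k\ge1$. Choose a letter $x$ occurring in $v$ and take $k$ so large that $x$ occurs at least $n$ times in $v^k$, where $x^n=0$ holds in $\mathcal V$. Then $v^kuw^k=0$ holds in $\mathcal V$, whence $\mathcal V$ satisfies $u=0$.
\end{proof}
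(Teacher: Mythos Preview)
The paper does not actually prove this lemma: it is introduced as ``well known and easily verified'' and the statement is closed with a \qed, so there is no argument in the paper to compare against. Your overall strategy is the standard one and part~(i) is correct, though the phrase ``the letter $x$ occurs at least $n$ times in $u'$, so $u'=0$'' is imprecise. What you really use is that $x^n$ occurs as a \emph{contiguous factor} of $u'$ (since one occurrence of $x$ in $u$ was replaced by the block $x^n$); then $u'=ax^nb$ and $x^n=0$ gives $u'=0$. The identity $x^n=0$ does \emph{not} say that any word with $n$ scattered occurrences of a letter is zero.

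In part~(ii) this same imprecision becomes a genuine gap. Your justification ``choose a letter $x$ occurring in $v$ and take $k$ so large that $x$ occurs at least $n$ times in $v^k$, \dots\ then $v^kuw^k=0$'' is not valid: having $n$ occurrences of $x$ in a word does not force that word to be zero. For a concrete witness, in $\var\{x^2=0\}$ the word $xyx$ has two occurrences of $x$ but is not zero; the semigroup consisting of all square-free words over a countable alphabet together with an adjoined zero (product $=$ concatenation if square-free, else $0$) lies in this variety and separates $xyx$ from $0$. In particular, if $\ell_x(v)\ge 2$ your criterion may be met by some $k<n$, and then $v^k$ need not be zero. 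The fix is immediate and even simpler than what you wrote: take $k=n$. Substituting $x\mapsto v$ into $x^n=0$ gives $v^n=0$ in $\mathcal V$, hence $v^nuw^n=0$; combined with $u=v^nuw^n$ this yields $u=0$. With this one-line correction your proof is complete.
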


A semigroup variety $\mathcal V$ is called \emph{overcommutative} if
$\mathcal{V\supseteq COM}$. An identity $u=v$ is called \emph{balanced} if
$\ell_x(u)=\ell_x(v)$ for every letter $x$. For convenience of references, we
formulate the following two generally known facts.

\begin{lemma}
\label{periodic or oc}
An arbitrary semigroup variety is either periodic or overcommutative.\qed
\end{lemma}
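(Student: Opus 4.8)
The plan is to argue by contradiction. Suppose $\mathcal V$ is a semigroup variety that is neither periodic nor overcommutative. Since $\mathcal V$ is not overcommutative, we have $\mathcal{V}\not\supseteq\mathcal{COM}$, so some commutative identity fails in $\mathcal V$; equivalently, there is an identity $u=v$ holding in $\mathcal{COM}$ — that is, a balanced identity with $c(u)=c(v)$ — that does not hold in $\mathcal V$. On the other hand, since $\mathcal V$ is not periodic, $\mathcal V$ contains the variety generated by the infinite cyclic semigroup (the free monogenic semigroup), so no identity of the form $x^n=x^{n+m}$ holds in $\mathcal V$. I would like to conclude that the free monogenic semigroup satisfies no non-trivial identity at all beyond the obvious ones, and combine this with the failure of the balanced identity to derive a contradiction.

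First I would observe that an identity $u=v$ holds in the free monogenic semigroup if and only if $\ell(u)=\ell(v)$ where $\ell$ denotes total length (equivalently, $\sum_x \ell_x(u)=\sum_x \ell_x(v)$): indeed, substituting a single generator $a$ of infinite order turns $u$ into $a^{\ell(u)}$ and $v$ into $a^{\ell(v)}$, and in the free monogenic semigroup $a^k=a^l$ forces $k=l$. Since $\mathcal V$ contains this semigroup, every identity of $\mathcal V$ must be \emph{length-balanced}: $\ell(u)=\ell(v)$. Next, I would use the standard trick that lets one upgrade "length-balanced" to "content-balanced and then to fully balanced": given any identity $u=v$ of $\mathcal V$ and any letter $x$ occurring in it, substitute $x\mapsto x^N$ for large $N$ and all other letters by $1$-like behavior — more precisely, work in the free commutative-with-the-given-identities setting. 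The cleanest route: for each letter $x$, the substitution sending $x$ to itself and every other letter to a fixed power of $x$ shows, together with length-balance, that $\mathcal V$ satisfies $\ell_x(u)=\ell_x(v)$ for each $x$; hence every identity of $\mathcal V$ is balanced.

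Now the contradiction: I claimed $\mathcal V$ fails some balanced identity $u=v$ with $c(u)=c(v)$ (one valid in $\mathcal{COM}$), yet I have just shown every identity of $\mathcal V$ is balanced — but being balanced does not by itself mean the identity \emph{holds} in $\mathcal V$, so this line needs the reverse implication. The correct statement to aim for is: every balanced identity holds in $\mathcal V$, which would give $\mathcal{V}\supseteq\mathcal{COM}$. To get this, I would use that $\mathcal V$, being non-periodic, contains the free monogenic semigroup, and in fact one shows directly that a variety containing the free monogenic semigroup and satisfying a non-balanced identity is impossible, while a variety all of whose defining relations... — here the honest approach is simply: the variety of all semigroups satisfying all balanced identities is exactly $\mathcal{COM}$ (a balanced identity $u=v$ with the same content forces commutativity once one has, say, $xy\cdot z = z\cdot xy$ type consequences), and $\mathcal{COM}$ is generated by... no — rather, $\mathcal V$ non-periodic forces it to contain a non-periodic monogenic semigroup, whence (by a known structural fact, e.g. Lemma~\ref{M+N} applied in the contrapositive, or directly) any identity of $\mathcal V$ must be balanced, and the least variety satisfying only balanced identities contains $\mathcal{COM}$.

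The main obstacle I expect is precisely this last inference — turning "all identities of $\mathcal V$ are balanced" into "$\mathcal V$ is overcommutative" — since a priori balancedness of the identity basis is a restriction on which identities can be \emph{deduced}, not a guarantee that a particular balanced identity is among them. The resolution is to show that the variety defined by \emph{all} balanced identities equals $\mathcal{COM}$: any balanced identity $u=v$ is a consequence of commutativity (trivially), and conversely commutativity $xy=yx$ is balanced, so the balanced-identity variety contains $\mathcal{COM}$; then, since every identity of $\mathcal V$ is balanced, $\mathcal V$ contains the balanced-identity variety, hence $\mathcal V\supseteq\mathcal{COM}$, i.e.\ $\mathcal V$ is overcommutative — contradicting our assumption. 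This is a short argument once the length-balancing substitutions are set up correctly, so I would keep the write-up to a few lines, citing the monogenic-semigroup word problem as the only external input.
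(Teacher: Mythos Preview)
The paper does not actually prove this lemma; it is listed among ``generally known facts'' and carries a \qed\ with no argument. Your approach is the standard one and the ingredients are all correct, but you talk yourself into a non-existent difficulty in the middle.

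Everything you need is already in your second paragraph. If $\mathcal V$ is not periodic then the free monogenic semigroup lies in $\mathcal V$, so every identity $u=v$ of $\mathcal V$ satisfies $\ell(u)=\ell(v)$. Applying the substitution $x\mapsto x^N$ (other letters fixed) to such an identity and using length-balance again yields $\ell_x(u)=\ell_x(v)$ for each letter $x$; hence every identity of $\mathcal V$ is balanced. Since the identities holding in $\mathcal{COM}$ are precisely the balanced ones, this says the identity set of $\mathcal V$ is contained in that of $\mathcal{COM}$, i.e.\ $\mathcal{COM}\subseteq\mathcal V$. Done.

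Your third paragraph then reverses the required inclusion: you write that you must show ``every balanced identity holds in $\mathcal V$'', but that would be $\mathcal V\subseteq\mathcal{COM}$ --- the wrong direction, and false in general. Consequently the ``main obstacle'' you flag in the final paragraph is illusory: the step ``all identities of $\mathcal V$ are balanced $\Rightarrow$ $\mathcal V\supseteq\mathcal{COM}$'' is immediate from the Galois correspondence between varieties and fully invariant congruences (larger identity set $\Leftrightarrow$ smaller variety), not something that needs a separate argument. Once you drop that detour, the write-up is three lines.
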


\begin{lemma}
\label{oc ident}
If an overcommutative semigroup variety satisfies some identity then this
identity is balanced.\qed
\end{lemma}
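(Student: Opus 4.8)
The plan is to reduce the claim to testing the identity on a single, well-chosen commutative semigroup. Since the variety $\mathcal V$ is overcommutative, $\mathcal{COM}\subseteq\mathcal V$, so every subvariety of $\mathcal V$ — in particular every commutative semigroup — satisfies any identity that holds in $\mathcal V$. Thus the additive semigroup $\langle\mathbb N_0;+\rangle$ of all non-negative integers, being commutative, satisfies $u=v$.

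Next I would fix an arbitrary letter $x$ and consider the homomorphism $\varphi$ from the free semigroup into $\langle\mathbb N_0;+\rangle$ sending $x$ to $1$ and every other letter to $0$. For any word $w$ one has $\varphi(w)=\ell_x(w)$. Applying $\varphi$ to both sides of $u=v$, which holds in $\langle\mathbb N_0;+\rangle$, gives $\ell_x(u)=\ell_x(v)$. As $x$ was arbitrary, this equality holds for every letter, which is exactly the assertion that $u=v$ is balanced. (The degenerate case of a letter occurring in one of $u,v$ but not the other is automatically subsumed: such a letter would force $0=\ell_x(u)=\ell_x(v)>0$ or vice versa.)

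There is essentially no obstacle here: the only point worth noting is that a single semigroup, $\langle\mathbb N_0;+\rangle$, simultaneously computes all the occurrence counts $\ell_x$, and that the inclusion $\mathcal{COM}\subseteq\mathcal V$ is precisely what makes this semigroup available as a test algebra. One could instead argue through the free commutative monoid, but the integer model keeps the bookkeeping transparent and the proof a few lines long.
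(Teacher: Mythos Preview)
The paper does not actually prove this lemma: it is listed among ``generally known facts'' and is stated with an immediate \qed, so there is no argument to compare against. Your proof is correct and is essentially the standard one-line verification: test the identity in a commutative semigroup that reads off occurrence counts. One small wording slip to fix: a commutative semigroup is not a ``subvariety of $\mathcal V$'' but rather a \emph{member} of $\mathcal V$ (since $\mathcal{COM}\subseteq\mathcal V$); the logic you intend is simply that every semigroup in $\mathcal V$ satisfies $u=v$, and $\langle\mathbb N_0;+\rangle\in\mathcal{COM}\subseteq\mathcal V$.
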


\section{Proofs of main results}
\label{proof}

We need some new definitions and notation. We denote by $\F$ the free
semigroup over a countably infinite alphabet. If $u\in\F$ then we denote by
$\ell(u)$ the length of $u$. For an arbitrary semigroup variety $\mathcal X$,
we denote by $\Lat(\mathcal X)$ its subvariety lattice, by $\F(\mathcal X)$
its free semigroup over a countably infinite alphabet, by $\F_c(\mathcal X)$
its free semigroup over an alphabet $c$, and by $\Lat(\F(\mathcal X))$ the
lattice of fully invariant congruences on $\F(\mathcal X)$. The equality
relation on $\F(\mathcal X)$ and $\F_c(\mathcal X)$ will be denoted by
$\equiv$. Let now $\mathcal X$ be an overcommutative variety and $U\in\F
(\mathcal X)$. Lemma~\ref{oc ident} permits to define the \emph{length} of
$U$ (denoted by $\ell(U)$) as the length of an arbitrary word $u\in U$, the
\emph{content} of $U$ (denoted by $c(U)$) as the content of an arbitrary word
$u\in U$, and the \emph{number of occurerences of a letter $x$ in $U$}
(denoted by $\ell_x(U)$) as the number of occurences of $x$ in an arbitrary
word $u\in U$. The $\mathcal X$-\emph{image} of an arbitrary word is its
image under the natural homomorphism from $\F$ to $\F(\mathcal X)$. We call
elements $U$ and $V$ of $\F(\mathcal X)$ \emph{equivalent} if $U\equiv\xi(V)$
for some automorphism $\xi$ on $\F(\mathcal X)$ (in other words, if $|c(U)|=
|c(V)|$ and $U\equiv\xi(V)$ for some isomorphism $\xi$ from $\F_{c(V)}
(\mathcal X)$ to $\F_{c(U)}(\mathcal X)$). We call two words $\mathcal
X$-\emph{equivalent} if their $\mathcal X$-images are equivalent. We say that
an element $W\in\F(\mathcal X)$ is $\mathcal X$-\emph{unstable} if $\xi(W)
\not\equiv W$ for any non-trivial automorphism $\xi$ on $\F_{c(W)}(\mathcal
X)$. Otherwise the element $W$ is called $\mathcal X$-\emph{stable}. A word
is called $\mathcal X$-[\emph{un}]\emph{stable} if its $\mathcal X$-image is
[un]stable. The group of automorphisms [the semigroup of endomorphisms] of a
semigroup $S$ is denoted by $\Aut(S)$ [respectively $\End(S)$].

The following lemma plays the key role in what follows.

\begin{lemma}
\label{u=v,s=t->u=s}
Let $\mathcal X$ be an overcommutative semigroup variety and let $u,v,s$,
and $t$ be $\mathcal X$-unstable and pairwise $\mathcal X$-non-equivalent
words with the same length and the same content. If a semigroup variety
$\mathcal V$ is either a modular or a lower-modular element of the lattice
$\Lat(\mathcal X)$ and $\mathcal V$ satisfies the identities $u=v$ and $s=t$
then $\mathcal V$ satisfies also the identity $u=s$.
\end{lemma}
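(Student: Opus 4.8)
The plan is to argue entirely inside $\Lat(\mathcal X)$. For distinct words $p,q$ among $u,v,s,t$ let $\mathcal X[p=q]$ denote the largest subvariety of $\mathcal X$ satisfying the identity $p=q$, and more generally, for a partition $\pi$ of $\{u,v,s,t\}$, let $\mathcal X[\pi]$ be the largest subvariety of $\mathcal X$ satisfying $p=q$ for all $p,q$ lying in a common block of $\pi$. The hypotheses $\mathcal V\models u=v$ and $\mathcal V\models s=t$ mean exactly $\mathcal V\subseteq\mathcal X[u=v]$ and $\mathcal V\subseteq\mathcal X[s=t]$, hence $\mathcal V\subseteq\mathcal X[u=v]\cap\mathcal X[s=t]$; the goal is $\mathcal V\subseteq\mathcal X[u=s]$.

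The first ingredient is a combinatorial lemma, which I would state and prove separately for free objects of overcommutative varieties: because $u,v,s,t$ have the same length and content and are $\mathcal X$-unstable and pairwise $\mathcal X$-non-equivalent, no subvariety of $\mathcal X$ satisfies an identity identifying two of these four words unless that identification is already forced by the identities of that subvariety among the four; instability is precisely what excludes the spurious collapses that internal symmetries of the words would otherwise cause. It follows that $\pi\mapsto\mathcal X[\pi]$ is a lattice anti-embedding of the partition lattice $\Pi_4$ into $\Lat(\mathcal X)$: the coarsest partition goes to $\mathcal X[u=v=s=t]$ and the finest to $\mathcal X$, and $\mathcal X[\pi_1]\cap\mathcal X[\pi_2]=\mathcal X[\pi_1\vee\pi_2]$ while $\mathcal X[\pi_1]\vee\mathcal X[\pi_2]=\mathcal X[\pi_1\wedge\pi_2]$. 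In particular the three ``matchings'' $A:=\mathcal X[u=v,s=t]$, $C:=\mathcal X[u=s,v=t]$, $D:=\mathcal X[u=t,v=s]$ pairwise join to $\mathcal X$ and pairwise meet in $E:=\mathcal X[u=v=s=t]$, i.e. they form a copy of $M_3$, and imposing any two of the three identities $u=v$, $s=t$, $u=s$ already forces $u=v=s=t$.

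Now $\mathcal V\subseteq A$, and $A$ is one vertex of this $M_3$. Since $\mathcal V\models u=s$ is equivalent (given $\mathcal V\models u=v$ and $\mathcal V\models s=t$) to $\mathcal V\subseteq C$, and then to $\mathcal V\subseteq A\cap C=E$, the task reduces to showing that a modular (respectively lower-modular) element of $\Lat(\mathcal X)$ that lies below $A$ must lie below $E$. I would do this by contradiction. Assuming $\mathcal V\not\subseteq E$, run a short case analysis. If $\mathcal V\cap C\not\subseteq E$ or $\mathcal V\cap D\not\subseteq E$, then — reading off the embedded $\Pi_4$ — $\mathcal V$ lies below one of $\mathcal X[v=t]$, $\mathcal X[u=t]$, $\mathcal X[v=s]$, and combining this with $\mathcal V\models u=v$ and $\mathcal V\models s=t$ gives $\mathcal V\models u=v=s=t$, a contradiction. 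In the remaining case $\mathcal V\cap C\subseteq E$ and $\mathcal V\cap D\subseteq E$; here I would exhibit a pentagon $N_5$ in $\Lat(\mathcal X)$ whose incomparable vertex is $\mathcal V$ — built from $\mathcal V$, the matching $C$, and suitable members of the embedded $\Pi_4$, arranged so that the required joins with $\mathcal V$ and meets with $\mathcal V$ coincide — which contradicts modularity of $\mathcal V$; the lower-modular case is handled by the dual configuration, i.e. a pentagon of the shape forbidden to lower-modular elements.

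The step I expect to be the genuine obstacle is this last one: producing that pentagon, i.e. choosing the auxiliary varieties so that all the meet- and join-relations with $\mathcal V$ hold at once. This is exactly where the full hypothesis — $u,v,s,t$ $\mathcal X$-unstable, pairwise $\mathcal X$-non-equivalent, of equal length and content — must be consumed; weakening it collapses the would-be pentagon, and indeed the statement fails without it. The combinatorial anti-embedding lemma is a second, less delicate but still non-trivial, prerequisite.
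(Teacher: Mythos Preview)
Your overall strategy --- passing to subvarieties $\mathcal X[\pi]$ indexed by partitions of $\{u,v,s,t\}$ and then exploiting modularity --- is in the right neighbourhood, and indeed the paper's proof is the congruence-theoretic dual of something close to this. But the proposal has a genuine gap at exactly the point you flag as the obstacle, and the shape you give that final step is misleading.

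First, the claimed anti-embedding of $\Pi_4$. The meet half $\mathcal X[\pi_1]\wedge\mathcal X[\pi_2]=\mathcal X[\pi_1\vee\pi_2]$ is immediate, but the join half $\mathcal X[\pi_1]\vee\mathcal X[\pi_2]=\mathcal X[\pi_1\wedge\pi_2]$ asserts that the fully invariant congruences generated by two disjoint sets of pairs intersect trivially; you do not prove this and it is not needed. Your ``combinatorial lemma'' as stated is also too vague: what is actually required is the \emph{class structure} of the generated congruences --- that in the congruence of $C=\mathcal X[u=s,v=t]$ the class of $U$ is exactly $\{U,S\}$, and in the congruence of $\mathcal X[v=t]$ the class of $U$ is $\{U\}$. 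Instability and pairwise non-equivalence are used precisely to establish these facts, not to get a lattice embedding of $\Pi_4$.

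Second, and more seriously, the pentagon you promise cannot be assembled from $\Pi_4$ elements alone in the way you describe. Take the natural candidates $\mathcal Y=E$, $\mathcal Z=C$: since $\mathcal V\subseteq A$ and $A\wedge C=E$, one has $(\mathcal V\vee E)\wedge C\subseteq A\wedge C=E$, so the modularity equation holds automatically and yields nothing. With $\mathcal Y=C$, $\mathcal Z=\mathcal X[v=t]$ the meets with $\mathcal V$ agree but the joins do not (if $\mathcal V\not\subseteq E$ then $\mathcal V\not\models v=t$, so $\mathcal V\vee C\not\models v=t$ either). The argument that works is not ``exhibit a pentagon'' but rather: write down the modularity equation $(\alpha\vee\beta')\wedge\gamma'=(\alpha\wedge\gamma')\vee\beta'$ for $\beta'\subseteq\gamma'$ the congruences of $\mathcal X[v=t]$ and $C$, observe $(U,S)$ lies in the left side via the chain $U\,\alpha\,V\,\beta'\,T\,\alpha\,S$, and then use the class structure above to force $(U,S)\in\alpha$. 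This is what the paper does (it additionally throws in an ideal $\mathfrak W$ of ``long'' elements to make the verification that the relevant congruences are fully invariant painless, but the mechanism is the same). The lower-modular case in the paper uses a third congruence $\delta$ and the auxiliary $\rho=\delta\wedge\alpha$; your ``dual configuration'' remark does not capture this.
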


\begin{proof}
Let $\ell$ be the length of the words $u,v,s$, and $t$, and let $c$ be the
content of these words. Denote by $U,V,S$, and $T$ the $\mathcal X$-images of
the words $u,v,s$, and $t$ respectively. Let $\alpha$ be the fully invariant
congruence on $\F(\mathcal X)$ corresponding to the variety $\mathcal V$.
Then $U\,\alpha\,V$ and $S\,\alpha\,T$. We aim to prove that $U\,\alpha\,S$.

Consider any two distinct elements $A,B\in\{U,V,S,T\}$ and any two
automorphisms $\xi$ and $\zeta$ on $\F(\mathcal X)$. We shall prove that the
pairs $\{\xi(A),\xi(B)\}$ and $\{\zeta(A),\zeta(B)\}$ either coincide or do
not intersect. Suppose that
$$\{\xi(A),\xi(B)\}\cap\{\zeta(A),\zeta(B)\}\ne\varnothing\ldotp$$
Acting by the automorphism $\xi^{-1}$ we get $\{A,B\}\cap\{\varphi(A),\varphi
(B)\}\neq\varnothing$ where $\varphi=\xi^{-1}\zeta$. Since the elements $A$
and $B$ are not equivalent, $A\not\equiv\varphi(B)$ and $B\not\equiv\varphi
(A)$, so either $A\equiv\varphi(A)$ or $B\equiv\varphi(B)$. We may suppose
without loss of generality that $A\equiv\varphi(A)$. Since $\varphi$ is an
automorphism, the restriction $\overline{\varphi}$ of $\varphi$ on $\F_{c(A)}
(\mathcal X)$ is an isomorphism from $\F_{c(A)}(\mathcal X)$ to
$\F_{\varphi(c(A))}(\mathcal X)$. But $\varphi(c(A))=c(\varphi(A))=c(A)$, so
$\overline{\varphi}$ is an automorphism on $\F_{c(A)}(\mathcal X)$. Since $A$
is unstable, the automorphism $\overline{\varphi}$ is trivial. Further, $c
(A)=c(B)$, whence $\varphi(B)\equiv\overline{\varphi}(B)\equiv B$. Acting by
the automorphism $\xi$ on the equality $\varphi(B)\equiv B$, we get $\xi(B)
\equiv\zeta (B)$, whence $\{\xi(A),\xi(B)\}=\{\zeta(A),\zeta(B)\}$.

Further, consider the set
$$\mathfrak W=\{W\in\F(\mathcal X)\mid\text{either}\ \ell(W)>\ell\ \text{or}\
\ell(W)=\ell\ \text{and}\ |c(W)|<|c|\}\ldotp$$
Since an arbitrary automorphism $\xi$ on $\F(\mathcal X)$ does not change the
parameters $\ell(W)$ and $|c(W)|$ of an element $W\in\F(\mathcal X)$, none of
the elements $\xi(U),\xi(V),\xi(S)$, and $\xi(T)$ belongs to $\mathfrak W$.
Therefore none of the pairs $\{\xi(U),\xi(S)\}$ and $\{\xi(V),\xi(T)\}$
intersects with $\mathfrak W$. Further, let $\xi,\zeta\in\Aut(\F(\mathcal
X))$. Then $\{U,S\}\cap\{\xi^{-1}\zeta(V),\xi^{-1}\zeta(T)\}=\varnothing$
because the elements $U,V,S$, and $T$ are pairwise non-equivalent. Therefore
$\{\xi(U),\xi(S)\}\cap\{\zeta(V),\zeta(T)\}=\varnothing$. Combining these
observations with the arguments from the previous paragraph, we obtain that
there is a partition of the set $\F(\mathcal X)$ whose non-singleton classes
are $\mathfrak W$ and 2-element sets of the form $\{\xi(U),\xi(S)\}$ and
$\{\xi(V),\xi(T)\}$ where $\xi$ runs over $\Aut(\F(\mathcal X))$. Denote the
equivalence relation corresponding to this partition by $\gamma$. By the same
arguments, there are partitions of $\F(\mathcal X)$ whose non-singleton
classes are:{\sloppy

}
\begin{itemize}
\item[$\bullet$]$\mathfrak W$ and pairs $\{\xi(V),\xi(T)\}$ where $\xi$ runs
over $\Aut(\F(\mathcal X))$;
\item[$\bullet$]$\mathfrak W$ and pairs $\{\xi(U),\xi(V)\}$ where $\xi$ runs
over $\Aut(\F(\mathcal X))$.
\end{itemize}
Denote the equivalence relation corresponding to the former [the latter] of
these partitions by $\beta$ [respectively $\delta$]. Obviously, $\beta
\subseteq\gamma$.

Now we aim to check that $\gamma$ is a fully invariant congruence. By the
definition of $\gamma$ the condition $W_1\,\gamma\,W_2$ for distinct elements
$W_1,W_2\in\F(\mathcal X)$ implies either $W_1,W_2\in\mathfrak W$ or $\{W_1,
W_2\}=\{\xi(U),\xi(S)\}$ or $\{W_1,W_2\}=\{\xi(V),\xi(T)\}$ for some
automorphism $\xi$. We see that $\ell(W_1),\ell(W_2)\ge\ell$ always.
Therefore if $W\in\F(\mathcal X)$ then $\ell(W_1W),\ell(W_2W),\ell(WW_1),\ell
(WW_2)>\ell$, whence $W_1W,W_2W,WW_1,WW_2\in\mathfrak W$. We see that $W_1W\,
\gamma\,W_2W$ and $WW_1\,\gamma\,WW_2$ for any element $W$. This means that
the relation $\gamma$ is a congruence. Now we shall prove that $\gamma$ is
invariant under an arbitrary endomorphism $\varphi$ on $\F(\mathcal X)$.
Obviously, for any $W\in\F(\mathcal X)$ we have $\ell(\varphi(W))\ge\ell(W)$
and if $\ell(\varphi(W))=\ell(W)$ (i.~e., if $\varphi$ maps every letter of
$c(W)$ to a letter) then $|c(\varphi(W))|\le|c(W)|$. This implies that
$\varphi(\mathfrak W)\subseteq\mathfrak W$. It remains to prove that $\varphi
(W_1)\,\gamma\,\varphi(W_2)$ for any distinct elements $W_1,W_2\in\F(\mathcal
X)$ with $W_1\,\gamma\,W_2$ and $W_1,W_2\notin\mathfrak W$. The latest means
that either $\{W_1,W_2\}=\{\xi(U),\xi(S)\}$ or $\{W_1,W_2\}=\{\xi(V),\xi
(T)\}$ for some $\xi$. We may suppose without loss of generality that $W_1
\equiv\xi(U)$ and $W_2\equiv\xi(S)$. The following three cases are possible:
\begin{itemize}
\item[1)]$\varphi$ maps some letter of $c(\xi(U))$ to an element of length
$>1$;
\item[2)]$\varphi$ maps every letter of $c(\xi(U))$ to a letter and maps some
two distinct letters of $c(\xi(U))$ to the same letter;
\item[3)]$\varphi$ maps all letters of $c(\xi(U))$ to distinct letters.
\end{itemize}
In the case 1) we have $\ell(\varphi(W_1)),\ell(\varphi(W_2))>\ell$, while in
the case 2) we have $\ell(\varphi(W_1))=\ell(\varphi(W_2))=\ell$ and $|c
(\varphi(W_1))|=|c(\varphi(W_2))|<|c|$. In both the cases $\varphi(W_1),
\varphi(W_2)\in\mathfrak W$. In the case 3) there is an automorphism
$\overline{\varphi}$ on $\F(\mathcal X)$ such that $\overline{\varphi}(W_1)
\equiv\varphi(W_1)$ and $\overline{\varphi}(W_2)\equiv\varphi(W_2)$, so the
pair $\{\varphi(W_1),\varphi(W_2)\}=\{\overline{\varphi}(\xi(U)),\overline
{\varphi}(\xi(S))\}$ is a $\gamma$-class. In all three cases $\varphi(W_1)\,
\gamma\, \varphi(W_2)$. We have verified that $\gamma$ is a fully invariant
congruence. Analogous arguments show that $\beta$ and $\delta$ are fully
invariant congruences too.

Suppose that the variety $\mathcal V$ is a modular element of the lattice
$\Lat(\mathcal X)$. Then $\alpha$ is a modular element of the lattice $\Lat
(\F(\mathcal X))$. Since $\beta\subseteq\gamma$, we have $(\alpha\vee\beta)
\wedge\gamma=(\alpha\wedge\gamma)\vee\beta$. Further, $U\,\alpha\,V\,\beta\,
T\,\alpha\,S$ and $U\,\gamma\,S$, whence $(U,S)\in(\alpha\vee\beta)\wedge
\gamma=(\alpha\wedge\gamma)\vee\beta$. On the other hand, $U\notin\mathfrak
W$ and $U\notin\{\xi(V),\xi(T)\}$ for any $\xi\in\Aut(\F(\mathcal X))$, so
the set $\{U\}$ is a $\beta$-class. If $\{U\}$ is an $(\alpha\wedge
\gamma)$-class then $\{U\}$ is an $((\alpha\wedge\gamma)\vee\beta)$-class
too. But this fails because $(U,S)\in(\alpha\wedge\gamma)\vee\beta$. Thus
there is an element $R$ such that $R\not\equiv U$ and $(U,R)\in\alpha\wedge
\gamma$. In particular $U\,\gamma\,R$. By the definition of $\gamma$ the set
$\{U,S\}$ is a $\gamma$-class. Hence $R\equiv S$. We see that $(U,S)\in\alpha
\wedge\gamma$. In particular, $U\,\alpha\,S$, and we are done.

We have verified the `modular half' of our lemma. Suppose now that the
variety $\mathcal V$ is a lower-modular element of the lattice $\Lat(\mathcal
X)$. Then $\alpha$ is an upper-modular element of the lattice $\Lat(\F
(\mathcal X))$. Put $\rho=\delta\wedge\alpha$. Since $\rho\subseteq\alpha$,
we have $(\gamma\vee\rho)\wedge\alpha=(\gamma\wedge\alpha)\vee\rho$. Since
$U\,\delta\,V$ and $U\,\alpha\,V$, we have $U\,\rho\,V$. Thus $S\,\gamma\,U\,
\rho\,V\,\gamma\,T$ and $S\,\alpha\,T$, whence $(S,T)\in(\gamma\vee\rho)
\wedge\alpha=(\gamma\wedge\alpha)\vee\rho$. On the other hand, $S\notin
\mathfrak W$ and $S\notin\{\xi(U),\xi(V)\}$ for any automorphism $\xi$, so
the set $\{S\}$ is a $\delta$-class. Hence $\{S\}$ is a $\rho$-class because
$\rho\subseteq\delta$. If $\{S\}$ is a $(\gamma\wedge\alpha)$-class then
$\{S\}$ is a $((\gamma\wedge\alpha)\vee\rho)$-class too. But this fails
because $(S,T)\in(\gamma\wedge\alpha)\vee\rho$. Thus there is an element $Q$
such that $Q\not\equiv S$ and $(S,Q)\in\gamma\wedge\alpha$. In particular
$S\,\gamma\,Q$. By the definition of $\gamma$ the set $\{S,U\}$ is a
$\gamma$-class. Hence $Q\equiv U$. We see that $(S,U)\in\gamma\wedge\alpha$.
In particular, $S\,\alpha\,U$, and we are done.
\end{proof}

A semigroup variety $\mathcal V$ is called \emph{proper} if $\mathcal{V\ne
SEM}$. For any word $w\equiv x_{j_1}x_{j_2}\cdots x_{j_n}$ where $x_{j_1},
x_{j_2},\dots,x_{j_n}$ are (not necessarily dufferent) letters and for $1\le
i\le n$ we put $w[i]\equiv x_{j_i}$.

\begin{corollary}
\label{mod and lmod are periodic}
If\/ $\mathcal V$ is either a modular or a lower-modular proper semigroup
variety then $\mathcal V$ is periodic.
\end{corollary}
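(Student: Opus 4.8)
The plan is to show that a non-periodic proper variety cannot be modular or lower-modular, which by Lemma~\ref{periodic or oc} means we may assume $\mathcal V$ is overcommutative and apply Lemma~\ref{u=v,s=t->u=s} to derive a contradiction with properness. So suppose $\mathcal V$ is modular or lower-modular and not periodic; then $\mathcal{V\supseteq COM}$, i.e. $\mathcal V$ is overcommutative, and moreover $\mathcal V=\Lat(\mathcal X)$-element of its own subvariety lattice with $\mathcal X=\mathcal V$ — but more usefully, $\mathcal V$ is a modular (resp.\ lower-modular) element of $\Lat(\mathcal{SEM})$, and $\mathcal{SEM}$ itself is overcommutative, so Lemma~\ref{u=v,s=t->u=s} applies with $\mathcal X=\mathcal{SEM}$: any nontrivial identity forced on $\mathcal V$ by two instances $u=v$, $s=t$ of the hypothesis must follow the pattern there. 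The idea is then to exhibit four specific $\mathcal{SEM}$-unstable, pairwise $\mathcal{SEM}$-non-equivalent words $u,v,s,t$ of equal length and content such that $\mathcal V$, if proper, must satisfy two of the identities among them but cannot satisfy the third, contradicting Lemma~\ref{u=v,s=t->u=s}.

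First I would pin down what it means for $\mathcal V$ proper and overcommutative: by Lemma~\ref{oc ident} every identity of $\mathcal V$ is balanced, and since $\mathcal V\ne\mathcal{SEM}$ there is at least one nontrivial balanced identity $p=q$ in $\mathcal V$ with $p\not\equiv q$. Because the identity is balanced, $p$ and $q$ have the same length and content; by choosing $p,q$ of minimal possible length among nontrivial identities of $\mathcal V$ and rewriting, I would like to reduce to a very concrete situation, e.g.\ to an identity involving a bounded number of letters. The natural candidate words to work with are those on two letters of small length, such as $x^2y^2,\ y^2x^2,\ xyxy,\ yxyx$ or permutational words like $xyz,\ xzy,\ yxz,\ zxy$ — words which are $\mathcal{SEM}$-unstable (no nontrivial renaming of their own letters fixes them) and pairwise non-equivalent. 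The point is that in an overcommutative variety the only way a nontrivial balanced identity can hold is essentially a "swap of blocks" or a "swap of letters", and iterating Lemma~\ref{u=v,s=t->u=s} on such words will propagate the relation to force $\mathcal V$ to collapse words it should distinguish — ultimately forcing $\mathcal V\supseteq$ some word-pair that makes $\mathcal V$ satisfy a contradictory consequence, or forcing $\mathcal V=\mathcal{SEM}$ outright.

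Concretely, the key step I anticipate: from a single nontrivial balanced identity of $\mathcal V$ one produces, by substituting letters and multiplying, a family of nontrivial balanced identities; Lemma~\ref{u=v,s=t->u=s} then lets us "compose" any two of them whose four sides are unstable and non-equivalent, obtaining new identities. One shows this process, starting from any nontrivial balanced identity, generates an identity of the form $u=s$ where $u$ and $s$ are genuinely different words that a proper overcommutative variety cannot identify (for instance one could drive things down to identifying two distinct words of the same minimal length, contradicting minimality, or up to the identity $x=xy$-type consequence which fails in every nontrivial variety). This yields $\mathcal V=\mathcal{SEM}$, contradicting properness; hence no proper $\mathcal V$ is non-periodic.

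The main obstacle I expect is the bookkeeping needed to guarantee that the words fed into Lemma~\ref{u=v,s=t->u=s} genuinely are $\mathcal{SEM}$-unstable and pairwise $\mathcal{SEM}$-non-equivalent at each stage — instability and non-equivalence are sensitive to the multiset of letter-multiplicities and to the presence of symmetries, so the words must be chosen (by introducing fresh letters or padding with distinct letters of multiplicity one) so as to break all automorphisms. A secondary obstacle is handling the case distinction on the shape of the initial nontrivial identity $p=q$: whether $c(p)=c(q)$ (it must, since the identity is balanced and $\mathcal V$ is overcommutative, so this is automatic), and whether $p,q$ differ only by a permutation of letters or by a deeper rearrangement. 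Once a minimal nontrivial balanced identity is normalized, though, a finite, explicit list of cases — each resolved by one or two applications of Lemma~\ref{u=v,s=t->u=s} — should close the argument. Combined with Lemma~\ref{periodic or oc}, this establishes that $\mathcal V$ is periodic.
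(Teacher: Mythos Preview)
Your framework is correct --- assume $\mathcal V$ is overcommutative via Lemma~\ref{periodic or oc}, then apply Lemma~\ref{u=v,s=t->u=s} with $\mathcal X=\mathcal{SEM}$ --- but what you have written is a plan, not a proof: you never produce the four words, and you anticipate obstacles (case analysis on the shape of $p=q$, iteration of the lemma, a minimality argument) that turn out to be unnecessary. The paper's argument is a single explicit construction with no cases and one application of the lemma.

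Here is the missing idea. Take any nontrivial identity $w_1=w_2$ of $\mathcal V$; it is balanced by Lemma~\ref{oc ident}. Pick a position $i$ where $w_1[i]\not\equiv w_2[i]$, say $x\equiv w_1[i]$ and $y\equiv w_2[i]$. Set
\[
u\equiv x^2w_1,\quad v\equiv x^2w_2,\quad s\equiv xyw_1,\quad t\equiv xyw_2.
\]
Then $u=v$ and $s=t$ both follow from $w_1=w_2$, and all four words have the same length and content. Your concern about $\mathcal{SEM}$-instability is moot: \emph{every} word is $\mathcal{SEM}$-unstable, since a permutation of letters that fixes a word letter-by-letter must be the identity on its content. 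Pairwise non-equivalence is a short direct check (any $\xi$ with $\xi(u)\equiv v$ would force $\xi(x)\equiv x$ from the prefix but $\xi(x)\equiv y$ from position $i+2$; any $\xi$ sending $s$ or $t$ to $u$ or $v$ would have to send $xy$ to $x^2$). Lemma~\ref{u=v,s=t->u=s} then gives $u=s$, i.e.\ $x^2w_1=xyw_1$, which is \emph{unbalanced} in $x$. By Lemma~\ref{oc ident} this contradicts overcommutativity, so $\mathcal V$ is periodic.

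The point you missed is that the goal is not to force $\mathcal V$ to identify two specific balanced words (which would require you to first show $\mathcal V$ satisfies particular identities among your candidates $x^2y^2,\,xyxy,\ldots$), but simply to manufacture an unbalanced consequence from any nontrivial balanced identity. Prefixing by $x^2$ versus $xy$ does exactly this in one step.
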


\begin{proof}
Since the variety $\mathcal V$ is proper, it satisfies some non-trivial
identity $w_1=w_2$. Suppose that $\mathcal V$ is not periodic. Then $\mathcal
V$ is overcommutative by Lemma~\ref{periodic or oc}, whence the identity
$w_1=w_2$ is balanced by Lemma~\ref{oc ident}. There is some $i\in\{1,2,
\dots,\ell(w_1)\}$ with $w_1[i]\not\equiv w_2[i]$. Put $x\equiv w_1[i]$ and
$y\equiv w_2[i]$. Consider the words $u\equiv x^2w_1$, $v\equiv x^2w_2$, $s
\equiv xyw_1$, and $t\equiv xyw_2$. The identities $u=v$ and $s=t$ are
satisfied in $\mathcal V$ because they follow from $w_1=w_2$. The words $u$,
$v$, $s$, and $t$ have the same length and the same content because $\ell
(w_1)=\ell(w_2)$ and $c(w_1)=c(w_2)$. Now we aim to prove that these words
are pairwise non-equivalent. Suppose that $u$ and $v$ are equivalent, so $v
\equiv\xi(u)$ for some $\xi\in\Aut(\F)$. Then $x^2w_2\equiv(\xi(x))^2\xi
(w_1)$, whence $\xi(x)\equiv x$ and $\xi(w_1)\equiv w_2$. But $\xi(w_1)\equiv
w_2$ implies $\xi(x)\equiv\xi(w_1[i])\equiv w_2[i]\equiv y\not\equiv x$. The
contradiction shows that $u$ and $v$ are non-equivalent. The words $s$ and
$t$ are non-equivalent by analogous arguments. Finally, each of the words $u$
and $v$ is not equivalent to each of the words $s$ and $t$ because any of the
equalities $u\equiv\xi(s)$, $v\equiv\xi(s)$, $u\equiv\xi(t)$, and $v\equiv\xi
(t)$ for any $\xi\in\Aut(\F)$ would imply $\xi(xy)\equiv x^2$ that is
impossible. Since all words (in particular, the words $u$, $v$, $s$, and $t$)
are $\mathcal{SEM}$-unstable, we can apply Lemma~\ref{u=v,s=t->u=s} with
$\mathcal{X=SEM}$ and conclude that $\mathcal V$ satisfies the identity $u=
s$. This identity is not balanced because $\ell_x(u)=\ell_x(w_1)+2$, while
$\ell_x(s)=\ell_x(w_1)+1$. Lemmas~\ref{periodic or oc} and~\ref{oc ident}
imply that $\mathcal V$ is periodic.
\end{proof}

Note that the fact that a proper lower-modular semigroup variety is periodic
was verified earlier by another way in~\cite{Vernikov-07-lmod}, Theorem~1.

\begin{proposition}
\label{periodic in oc}
Let $\mathcal X$ be an overcommutative semigroup variety and $\mathcal V$ a
periodic subvariety of $\mathcal X$. If\/ $\mathcal V$ is either a modular or
a lower-modular element of the lattice $\Lat(\mathcal X)$ then $\mathcal{V=M
\vee N}$ where $\mathcal M$ is one of the varieties $\mathcal T$ or $\mathcal
{SL}$, while $\mathcal N$ is a nil-variety.
\end{proposition}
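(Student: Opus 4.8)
The plan is to combine the structural dichotomy for periodic varieties (Lemma~\ref{M+N}) with the non-trivial identity machinery of Lemma~\ref{u=v,s=t->u=s} to exclude the ``bad'' atoms $\mathcal{LZ}$, $\mathcal{RZ}$, $\mathcal P$, and $\overleftarrow{\mathcal P}$ from $\mathcal V$, and then to identify the monoid-generated factor $\mathcal M$ as $\mathcal T$ or $\mathcal{SL}$. So the first step is: since $\mathcal V\subseteq\mathcal X$ with $\mathcal X$ overcommutative, any identity holding in $\mathcal V$ that fails to be balanced would already force a contradiction only if $\mathcal V$ were overcommutative — but here $\mathcal V$ is assumed periodic, so that is not immediately the lever. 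Instead I would argue that $\mathcal V$ cannot contain any of $\mathcal{LZ}$, $\mathcal{RZ}$, $\mathcal P$, $\overleftarrow{\mathcal P}$. For $\mathcal{LZ}$: suppose $\mathcal{LZ}\subseteq\mathcal V$. Then by Lemma~\ref{word problem}(i) the only identities of $\mathcal V$ in which both sides have the same head are available, and one manufactures a suitable quadruple $u,v,s,t$ of $\mathcal X$-unstable, pairwise $\mathcal X$-non-equivalent words with common length and content, each pair of which follows in $\mathcal V$ from $\mathcal V$ being periodic (hence satisfying $x^{n+k}=x^n$ for some $n,k$), such that Lemma~\ref{u=v,s=t->u=s} yields an identity $u=s$ with $h(u)\not\equiv h(s)$ — contradicting Lemma~\ref{word problem}(i). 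The analogous argument with the tail handles $\mathcal{RZ}$, and with Lemma~\ref{word problem}(ii) (the $t(\cdot)$/multiplicity criterion) handles $\mathcal P$ and, by the left--right dual, $\overleftarrow{\mathcal P}$.

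Once $\mathcal V$ contains none of these four varieties, Lemma~\ref{M+N} applies directly and gives $\mathcal{V=M\vee N}$ with $\mathcal M$ generated by a monoid and $\mathcal N$ a nil-variety. The remaining task is to show that the monoid factor $\mathcal M$ must be one of $\mathcal T$ or $\mathcal{SL}$. Here I would use that $\mathcal V$, being periodic, satisfies $x^{n+k}=x^n$; a monoid in $\mathcal M$ is then a periodic monoid, and its group of units is an abelian group of exponent dividing some $m$. If that group were non-trivial, $\mathcal M$ (and hence $\mathcal V$) would contain $\mathcal A_p$ for some prime $p$, and I would again produce a contradiction via Lemma~\ref{u=v,s=t->u=s}: balanced identities that distinguish abelian-group words from nilpotent behaviour give a quadruple to which the lemma applies, forcing an identity that $\mathcal A_p\vee(\text{nil})$ cannot satisfy. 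So the group of units is trivial, i.e.\ the generating monoid is a nil-monoid, which forces $\mathcal M\subseteq\mathcal{SL}$ (a monoid with trivial unit group in a variety near the top being essentially a semilattice with identity adjoined). Thus $\mathcal M=\mathcal T$ or $\mathcal M=\mathcal{SL}$, completing the proof.

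The main obstacle I anticipate is the bookkeeping needed to construct, in each of the exclusion steps, a genuinely valid quadruple $(u,v,s,t)$: the words must simultaneously be $\mathcal X$-unstable (so that no non-trivial automorphism of the relevant free object fixes them), pairwise $\mathcal X$-non-equivalent, of equal length and content, and such that $\mathcal V\models u=v$ and $\mathcal V\models s=t$ really do follow from the identities forced by periodicity (or by the presence of the atom in question), while $\mathcal V\models u=s$ contradicts Lemma~\ref{word problem}. Instability is the delicate point: over an overcommutative $\mathcal X$ a word like $x^2w$ is automatically unstable because the squared letter and the content pin down every letter, but one must verify this carefully for the particular shapes used in the $\mathcal P$ and $\overleftarrow{\mathcal P}$ cases, where the tail multiplicity is the distinguishing feature. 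I expect the pattern of Corollary~\ref{mod and lmod are periodic} — prefixing by $x^2$ versus $xy$ to kill automorphisms and to break balance — to be the template reused, with minor variants, throughout.
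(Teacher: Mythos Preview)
Your broad strategy---use Lemma~\ref{u=v,s=t->u=s} to manufacture identities excluding $\mathcal{LZ},\mathcal{RZ},\mathcal P,\overleftarrow{\mathcal P}$, then invoke Lemma~\ref{M+N}---is the paper's as well, though the paper dispatches all four exclusions with a single quadruple rather than four separate contradiction arguments. Two substantive points, however, need attention.

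The argument that $\mathcal M\in\{\mathcal T,\mathcal{SL}\}$ has a real gap. Excluding every $\mathcal A_p$ from $\mathcal V$ only tells you the generating monoid has trivial group of units; it does \emph{not} force $\mathcal M\subseteq\mathcal{SL}$. For instance, the three-element commutative monoid $\{1,a,a^2\}$ with $a^3=a^2$ has trivial unit group but generates a variety not contained in $\mathcal{SL}$ (it fails $x=x^2$). Your claim that a ``nil-monoid'' forces $\mathcal M\subseteq\mathcal{SL}$ is simply false. The paper instead builds a second quadruple $u',v',s',t'$, chosen so that in each word the three letters have pairwise distinct multiplicities (whence $\mathcal X$-instability is automatic for any overcommutative $\mathcal X$), applies Lemma~\ref{u=v,s=t->u=s} to obtain an identity $u'=s'$ in $\mathcal V$, and then substitutes $1$ for two of the letters to get $z=z^2$ in $\mathcal M$ directly.

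On instability in the first step: your plan to guarantee it by the $x^2$-prefix device of Corollary~\ref{mod and lmod are periodic} does not obviously survive the requirement that $u=s$ fail in $\mathcal{LZ}$ \emph{and} $\mathcal{RZ}$ simultaneously (so $u$ and $s$ must differ in both head and tail). The paper's first quadruple in fact contains two letters of multiplicity~$1$ and so may well be $\mathcal X$-stable for some overcommutative $\mathcal X$; this is handled as a separate case. The key observation there is that if, say, $u$ is $\mathcal X$-stable, the nontrivial automorphism fixing its $\mathcal X$-image must permute letters of equal multiplicity, and the resulting permutation identity is then an identity of $\mathcal X$ itself---hence of $\mathcal V$---which already fails in $\mathcal{LZ},\mathcal{RZ},\mathcal P,\overleftarrow{\mathcal P}$. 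You should either incorporate this case split or exhibit explicit quadruples with pairwise distinct letter multiplicities that still achieve $h(u)\not\equiv h(s)$ and $t(u)\not\equiv t(s)$.
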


\begin{proof}
At first, we shall prove that $\mathcal{V=M\vee N}$ where $\mathcal M$ is
generated by a commutative monoid, while $\mathcal N$ is a nil-variety. Being
periodic, the variety $\mathcal V$ satisfies the identity $x^n=x^{n+m}$ for
some natural $n$ and $m$. We may assume without loss of generality that $n>
1$. Put
\begin{align*}
&u\equiv xy^{n+4m+3}z^nq,\phantom{x,}\quad v\equiv xy^{n+2m+3}z^{n+2m}q,\\
&s\equiv qy^{n+4m+2}z^{n+1}x,\quad t\equiv qy^{n+2m+2}z^{n+2m+1}x\ldotp
\end{align*}
The variety $\mathcal V$ satisfies the identities $u=v$ and $s=t$. Further
considerations are naturally divided into two cases.

\smallskip

\emph{Case}~1: the words $u,v,s$, and $t$ are $\mathcal X$-unstable. These
words have the same length and the same content. Besides that, these words
are pairwise $\mathcal X$-non-equivalent because if $w_1,w_2\in\{u,v,s,t\}$
and $w_1\not\equiv w_2$ then $\max\limits_{a\in c(w_1)}\{\ell_a(w_1)\}\ne\max
\limits_{a\in c(w_2)}\{\ell_a(w_2)\}$. Thus, we can apply Lemma~\ref
{u=v,s=t->u=s} and conclude that the identity $u=s$, that is, the identity
\begin{equation}
\label{u=s}
xy^{n+4m+3}z^nq=qy^{n+4m+2}z^{n+1}x
\end{equation}
holds in $\mathcal V$. Lemma~\ref{word problem} and its dual imply that this
identity fails in the varieties $\mathcal{LZ,RZ,P}$, and $\overleftarrow
{\mathcal P}$. Now Lemma~\ref{M+N} applies and we conclude that $\mathcal{V=M
\vee N}$ where $\mathcal M$ is generated by a monoid, while $\mathcal N$ is a
nil-variety. Substituting 1 for $y$ and $z$ in the identity~\eqref{u=s}, we
have that the identity $xq=qx$ holds in $\mathcal M$, so $\mathcal M$ is
generated by a commutative monoid.

\smallskip

\emph{Case}~2: at least one of the words $u,v,s$, and $t$ is $\mathcal
X$-stable. Suppose that this word is $u$. Denote its $\mathcal X$-image by
$U$. There is a non-trivial automorphism $\xi$ on $\F_{\{x,y,z,q\}}(\mathcal
X)$ with $\xi(U)\equiv U$. This automorphism performs some permutation on the
set $\{x,y,z,q\}$. Since $\ell_y(U)>\ell_z(U)>\ell_x(U)=\ell_q(U)$, we have
$\xi(y)\equiv y$ and $\xi(z)\equiv z$. Since the automorphism $\xi$ is
non-trivial, we have $\xi(x)\equiv q$ and $\xi(q)\equiv x$. Thus the
$\mathcal X$-images of the words $xy^{n+4m+3}z^nq$ and $qy^{n+4m+3}z^nx$
coincide. This means that the identity
\begin{equation}
\label{case 2 eq}
xy^{n+4m+3}z^nq=qy^{n+4m+3}z^nx
\end{equation}
holds in $\mathcal X$ and therefore, in $\mathcal V$. Lemma~\ref
{word problem} and its dual imply that this identity fails in the varieties
$\mathcal{LZ,RZ,P}$, and $\overleftarrow{\mathcal P}$. Hence we may apply
Lemma~\ref{M+N} and conclude that $\mathcal{V=M\vee N}$ where $\mathcal M$ is
generated by a monoid, while $\mathcal N$ is a nil-variety. Substituting 1
for $y$ and $z$ in the identity~\eqref{case 2 eq}, we have that the identity
$xq=qx$ holds in $\mathcal M$, so $\mathcal M$ is generated by a commutative
monoid. Analogous arguments may be used if one of the words $v,s$ or $t$ is
$\mathcal X$-stable.

\smallskip

It remains to verify that $\mathcal M$ is one of the varieties $\mathcal T$
or $\mathcal{SL}$. To do this, we note that the variety $\mathcal V$
satisfies the identities $u'=v'$ and $s'=t'$ where
\begin{align*}
&u'\equiv x^{n+2m+4}y^{n+2}z,\phantom{{}^2}\quad v'\equiv x^{n+m+4}y^{n+m+2}
z,\\
&s'\equiv x^{n+2m+3}y^{n+2}z^2,\quad t'\equiv x^{n+m+3}y^{n+m+2}z^2\ldotp
\end{align*}
These words have the same length and the same content. Besides that, these
words are pairwise $\mathcal X$-non-equivalent because if $w_1,w_2\in\{u',v',
s',t'\}$ and $w_1\not\equiv w_2$ then either $\max\limits_{a\in c(w_1)}
\{\ell_a (w_1)\}\ne\max\limits_{a\in c(w_2)}\{\ell_a(w_2)\}$ or $\min
\limits_{a\in c(w_1)}\{\ell_a(w_1)\}\ne\min\limits_{a\in c(w_2)}\{\ell_a
(w_2)\}$. Finally, the words $u',v',s'$, and $t'$ are $\mathcal X$-unstable
because $\ell_x(w)>\ell_y(w)>\ell_z(w)$ for any $w\in\{u',v',s',t'\}$. By
Lemma~\ref{u=v,s=t->u=s} the variety $\mathcal V$ satisfies the identity $u'=
s'$, that is, the identity
$$x^{n+2m+4}y^{n+2}z=x^{n+2m+3}y^{n+2}z^2\ldotp$$
Substituting 1 for $x$ and $y$ in this identity, we have that the identity
$z=z^2$ holds in $\mathcal M$. Thus $\mathcal{M\subseteq SL}$, whence
$\mathcal M$ is one of the varieties $\mathcal T$ or $\mathcal{SL}$.
\end{proof}

Now we are well prepared to complete the proofs of Theorems~\ref{Com mod nec}
and \ref{Com lmod} and to give new proofs of Propositions~\ref{SEM mod nec}
and~\ref{SEM lmod}.

\smallskip

Proposition~\ref{SEM mod nec} [Theorem~\ref{Com mod nec}] is directly implied
by Proposition~\ref{periodic in oc} with $\mathcal{X=SEM}$ [respectively
$\mathcal{X=COM}$] and Corollary~\ref{mod and lmod are periodic} [Lemma~\ref
{periodic or oc}].\qed

\smallskip

\emph{Proof of Proposition}~\ref{SEM lmod} \emph{and Theorem}~\ref{Com lmod}.
In both the statements, sufficiency immediately follows from Lemmas~\ref
{lmod nil} and~\ref{join with SL}. One can prove necessity. Let $\mathcal V$
be a [commutative] lower-modular [in \textbf{Com}] semigroup variety with
$\mathcal{V\ne SEM}$ [respectively $\mathcal{V\ne COM}$]. Then the variety
$\mathcal V$ is periodic by Corollary~\ref{mod and lmod are periodic}
[Lemma~\ref{periodic or oc}]. Applying Proposition~\ref{periodic in oc} with
$\mathcal{X=SEM}$ [respectively $\mathcal{X=COM}$], we conclude that
$\mathcal{V=M\vee N}$ where $\mathcal M$ is one of the varieties $\mathcal T$
or $\mathcal{SL}$, while $\mathcal N$ is a nil-variety. Lemma~\ref
{join with SL} implies that the variety $\mathcal N$ is lower-modular [in
\textbf{Com}]. Then the variety $\mathcal N$ is 0-reduced [in \textbf{Com}]
by Lemma~\ref{lmod nil}.\qed

\smallskip

To prove Theorem~\ref{Com mod nil-nec}, we need some additional auxiliary
results.

\begin{lemma}
\label{u=0->v=0}
An identity $v=0$ follows from an identity system $\{xy=yx,u=0\}$ if and only
if there is some \textup(possibly empty\textup) word $w$ and some
endomorphism $\xi$ on $\F$ such that the identity $v=w\xi(u)$ is balanced.
\end{lemma}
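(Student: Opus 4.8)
The plan is to prove the two implications separately; the forward (``only if'') direction is the substantial one and will be handled by constructing a suitable commutative semigroup with zero.

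\emph{Sufficiency.} Assume that $v \equiv w\xi(u)$ is balanced for some (possibly empty) word $w$ and some endomorphism $\xi$ of $\F$. Recall that the symbolic identity $u = 0$ stands for the pair of identities $ux = xu = u$ with $x \notin c(u)$. Applying $\xi$ (extended so as to send $x$ to some letter not occurring in $\xi(u)$) shows that $\xi(u) = 0$ is a consequence of $\{u = 0\}$; together with the commutative law this yields $w\xi(u) = 0$, since multiplying a ``zero'' element by anything returns that element. On the other hand, a balanced identity holds in every commutative semigroup, so $v = w\xi(u)$ is a consequence of $\{xy = yx\}$. Combining these two facts, $v = 0$ follows from $\{xy = yx, u = 0\}$.

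\emph{Necessity.} Fix a countably infinite alphabet $X$ with $c(v) \subseteq X$, and let $C$ be the free commutative semigroup over $X$. Let $I$ be the subset of $C$ consisting of all products $g\cdot\eta(u)$, where $\eta$ runs over the endomorphisms of $C$ and $g$ runs over $C$ together with the empty word; this is the ideal of $C$ generated by all endomorphic images of the commutative word determined by $u$. Form the Rees quotient $S = C/I$. Then $S$ is a commutative semigroup with zero, and it satisfies the identity $u = 0$, because for any substitution of elements of $S$ into $u$ the resulting value is the $S$-image of some element $\eta(u) \in I$, hence equals the zero of $S$. Therefore $S$ lies in $\var\{xy = yx, u = 0\}$, so by hypothesis $v = 0$ holds in $S$. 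If $u$ is a single letter, then $I = C$, the semigroup $S$ is trivial, $v = 0$ holds vacuously, and we may take $w$ empty and $\xi$ the endomorphism sending the letter of $u$ to $v$. Otherwise no single generator lies in $I$, so evaluating $v$ at the substitution $x \mapsto x$ ($x \in X$) and using that $v = 0$ in $S$, we obtain that $v$, regarded as an element of $C$, lies in $I$. Hence $v$ coincides, up to commutativity, with $g\cdot\eta(u)$ for some endomorphism $\eta$ of $C$ and some (possibly empty) $g$. Lifting $\eta$ to an endomorphism $\xi$ of $\F$ and $g$ to a word $w$ of $\F$, this says precisely that the identity $v = w\xi(u)$ is balanced.

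\emph{The main obstacle} is the verification that $S$ indeed belongs to $\var\{xy = yx, u = 0\}$: one must check that \emph{every} substitution into $u$ --- including those sending some generators into $I$ --- produces an element of $I$, and one must separately dispose of the degenerate case in which $u$ is a single letter, as done above. Everything else amounts to routine translation between words of $\F$, endomorphisms of $\F$, and their images in the free commutative semigroup $C$. (Equivalently, the statement records the description of the free objects of $\var\{xy = yx, u = 0\}$ as Rees quotients of free commutative semigroups modulo the ideals of commutative multiples of substitution instances of $u$.)
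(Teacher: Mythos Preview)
Your proof is correct and follows essentially the same approach as the paper: both construct the Rees quotient of the free commutative semigroup by the fully invariant ideal generated by the commutative image of $u$, observe that this quotient satisfies $u=0$, and read off that the commutative image of $v$ lies in the ideal. The paper phrases the same construction as the fully invariant congruence $\alpha$ on $\F(\mathcal{COM})$ whose unique non-singleton class is $\mathfrak W=\{W\zeta(U)\}$ and then checks $\alpha=\alpha'$, but your $C$, $I$, and $S=C/I$ are exactly $\F(\mathcal{COM})$, $\mathfrak W$, and $\F(\mathcal{COM})/\alpha$.
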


\begin{proof}
Let $U$ be the $\mathcal{COM}$-image of the word $u$. Consider the set
$$\mathfrak W=\{W\zeta(U)\mid W\in\F(\mathcal{COM}),\,\zeta\in\End(\F
(\mathcal{COM}))\}\ldotp$$
It is evident that $\mathfrak W$ is an ideal of the semigroup $\F(\mathcal
{COM})$ and that $\zeta(\mathfrak W)\subseteq\mathfrak W$ for any
$\zeta\in\End(\F(\mathcal{COM}))$. Hence the relation $\alpha$ on $\F
(\mathcal{COM})$ defined by the rule
$$W_1\,\alpha\,W_2\ \text{if and only if either}\ W_1\equiv W_2\ \text{or}\
W_1,W_2\in\mathfrak W$$
is a fully invariant congruence on $\F(\mathcal{COM})$. Put $\mathcal V=\var
\{xy=yx,\,u=0\}$ and consider the fully invariant congruence $\alpha'$
corresponding to $\mathcal V$. We shall prove that $\alpha=\alpha'$. Since
$\alpha'$ corresponds to a 0-reduced in \textbf{Com} variety, it has only one
non-singleton class $\mathfrak W'$. The set $\mathfrak W'$ is an ideal of the
semigroup $\F(\mathcal{COM})$. This ideal contains $U$ and satisfies $\zeta
(\mathfrak W')\subseteq\mathfrak W'$ for any $\zeta\in\End(\F(\mathcal
{COM}))$ because the congruence $\alpha'$ is fully invariant. Therefore
$\zeta(U)\in\mathfrak W'$ for any endomorphism $\zeta$. Hence $W\zeta(U)\in
\mathfrak W'$ for any $W\in\F(\mathcal{COM})$ because $\mathfrak W'$ is an
ideal. We see that $\mathfrak{W\subseteq W'}$, whence $\alpha\subseteq
\alpha'$. Further, $\mathfrak W$ is a zero of the factor semigroup $\F
(\mathcal{COM})/\alpha$. Since the inclusion $\zeta(U)\in\mathfrak W$ holds
for any endomorphism $\zeta$ on $\F(\mathcal{COM})$, the identity $u=0$ holds
in $\F(\mathcal{COM})/\alpha$. Hence $\F(\mathcal{COM})/\alpha\in\mathcal V$,
so $\alpha'\subseteq\alpha$. We have proved that $\alpha=\alpha'$ and hence
$\mathfrak{W=W'}$.

An identity $v=0$ follows from the system $\{xy=yx,u=0\}$ if and only if it
holds in $\mathcal V$. This is so if and only if the $\mathcal{COM}$-image
$V$ of the word $v$ belongs to $\mathfrak{W'=W}$, i.~e.\ $V\equiv W\zeta(U)$
for some $W\in\F(\mathcal{COM})$ and some $\zeta\in\End(\F(\mathcal{COM}))$.
But every element of $\F(\mathcal{COM})$ is a $\mathcal{COM}$-image of some
word and every endomorphism on $\F(\mathcal{COM})$ has the form $\varphi\xi$
where $\xi$ is an endomorphism on $\F$, while $\varphi$ is the natural
homomorphism from $\F$ to $\F(\mathcal{COM})$. Therefore the equality $V
\equiv W\zeta(U)$ is the equality of $\mathcal{COM}$-images of words $v$ and
$w\xi(u)$ for some word $w$ and some $\xi\in\End(\F)$. So this equality means
that the identity $v=w\zeta(u)$ holds in $\mathcal{COM}$, whence it is
balanced by Lemma~\ref{oc ident}.
\end{proof}

\begin{corollary}
\label{equivalence}
Words $u$ and $v$ are $\mathcal{COM}$-equivalent if and only if the identity
systems $\{xy=yx,u=0\}$ and $\{xy=yx,v=0\}$ are equivalent.
\end{corollary}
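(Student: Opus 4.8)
The natural strategy is to reduce the statement to Lemma~\ref{u=0->v=0}, which already characterizes when $v=0$ follows from $\{xy=yx,u=0\}$. First I would observe that the two identity systems $\{xy=yx,u=0\}$ and $\{xy=yx,v=0\}$ are equivalent if and only if each of $v=0$ and $u=0$ follows from the other system (in the presence of commutativity). So by Lemma~\ref{u=0->v=0} this holds if and only if there are a word $w$ and an endomorphism $\xi$ of $\F$ with $v=w\xi(u)$ balanced, \emph{and} a word $w'$ and an endomorphism $\xi'$ of $\F$ with $u=w'\xi'(v)$ balanced. The task is then to show this mutual-derivability condition is equivalent to $\mathcal{COM}$-equivalence of $u$ and $v$, i.e.\ to $U\equiv\eta(V)$ for some automorphism $\eta$ of $\F(\mathcal{COM})$, where $U,V$ are the $\mathcal{COM}$-images of $u,v$.

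For the easy direction, if $u$ and $v$ are $\mathcal{COM}$-equivalent then $U\equiv\eta(V)$ for an automorphism $\eta$, which permutes and renames the letters of $c(V)$; lifting $\eta$ to an endomorphism $\xi'$ of $\F$ and taking $w'$ empty gives that $u=\xi'(v)$ holds in $\mathcal{COM}$, hence is balanced (Lemma~\ref{oc ident}); symmetrically for the other direction using $\eta^{-1}$. So each system yields the other, and the systems are equivalent. For the converse, suppose the two systems are equivalent. Then by the above we have $V\equiv W\,\zeta(U)$ and $U\equiv W'\,\zeta'(V)$ in $\F(\mathcal{COM})$ for suitable $W,W'\in\F(\mathcal{COM})$ and $\zeta,\zeta'\in\End(\F(\mathcal{COM}))$, and both equalities lift to \emph{balanced} identities. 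The point to extract is that balancedness forces $W$ and $W'$ to be empty and $\zeta,\zeta'$ to act bijectively on the relevant letters.

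Here is where the real work sits, and it is the step I expect to be the main obstacle. From $V\equiv W\zeta(U)$ balanced, compare lengths: since $\zeta$ does not decrease length, $\ell(V)\ge\ell(\zeta(U))\ge\ell(U)$; substituting back into $U\equiv W'\zeta'(V)$ gives $\ell(U)\ge\ell(V)$, so $\ell(U)=\ell(V)=\ell(\zeta(U))$, forcing $W$ empty and $\zeta$ to map every letter of $c(U)$ to a single letter; likewise $W'$ is empty and $\zeta'$ maps every letter of $c(V)$ to a letter. So $V\equiv\zeta(U)$ and $U\equiv\zeta'(V)$ as elements of $\F(\mathcal{COM})$, with $\zeta,\zeta'$ induced by letter-to-letter maps $c(U)\to c(V)$ and $c(V)\to c(U)$. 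Applying $\zeta'$ to the first gives $U\equiv\zeta'\zeta(U)$; since $\zeta'\zeta$ is a letter substitution fixing the $\mathcal{COM}$-image $U$ (a commutative word of length $\ell(U)$) and cannot collapse any letter of $c(U)$ without shortening $U$ — impossible by $\ell(\zeta'\zeta(U))=\ell(U)$ — the map $\zeta'\zeta$ restricted to $c(U)$ is a bijection of $c(U)$, hence the identity; symmetrically $\zeta\zeta'$ restricts to the identity on $c(V)$. Therefore the letter map underlying $\zeta$ is a bijection $c(U)\to c(V)$ with inverse that underlying $\zeta'$, and it extends to an automorphism $\eta$ of $\F(\mathcal{COM})$ with $\eta(U)\equiv V$. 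Thus $u$ and $v$ are $\mathcal{COM}$-equivalent, completing the proof.
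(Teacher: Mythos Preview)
Your strategy matches the paper's exactly: invoke Lemma~\ref{u=0->v=0} in both directions, compose the two resulting balanced identities, use the length equality to force the prefix words empty and the endomorphisms letter-to-letter, then show the induced letter maps are bijective on the contents.

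Two details need repair, however. First, collapsing two distinct letters of $c(U)$ into one does \emph{not} shorten $\zeta'\zeta(U)$: any letter-to-letter substitution preserves length. The correct reason $\zeta'\zeta$ cannot collapse letters is that $U\equiv\zeta'\zeta(U)$ in $\F(\mathcal{COM})$ means the identity $u=\zeta'\zeta(u)$ is balanced, so $c(u)=c(\zeta'\zeta(u))$; hence $\zeta'\zeta$ maps $c(U)$ onto itself and is bijective by finiteness. (This is how the paper argues.) Second, a bijection of the finite set $c(U)$ need not be the identity: if $U$ is the $\mathcal{COM}$-image of $xy$, the swap $x\leftrightarrow y$ fixes $U$. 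So you cannot conclude $\zeta'\zeta=\mathrm{id}$ on $c(U)$, nor that $\zeta'$ is the inverse of $\zeta$. Fortunately you don't need this: injectivity of $\zeta'\zeta$ on $c(U)$ already forces $\zeta$ to be injective on $c(U)$, and $c(V)=c(\zeta(U))=\zeta(c(U))$ gives surjectivity onto $c(V)$, so $\zeta$ is a bijection $c(U)\to c(V)$ and extends to the required automorphism of $\F(\mathcal{COM})$. With these two fixes your argument goes through and coincides with the paper's.
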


\begin{proof}
\emph{Necessity}. If $u$ and $v$ are $\mathcal{COM}$-equivalent then there is
some automorphism $\xi$ on $\F$ such that the identity $v=\xi(u)$ holds in
$\mathcal{COM}$, whence it is balanced by Lemma~\ref{oc ident}. Then the
identity $v=0$ follows from the system $\{xy=yx,u=0\}$ by Lemma~\ref
{u=0->v=0}. By symmetry, the identity $u=0$ follows from the system $\{xy=yx,
v=0\}$. Hence these two systems are equivalent.

\emph{Sufficiency}. Suppose that the systems $\{xy=yx,u=0\}$ and $\{xy=yx,v=
0\}$ are equivalent. By Lemma~\ref{u=0->v=0} there are some (possibly empty)
words $a$ and $b$ and some automorphisms $\xi$ and $\zeta$ on $\F$ such that
the identities $u=a\xi(v)$ and $v=b\zeta(u)$ are balanced. Then the identity
$u=a\xi(b\zeta(u))$ is balanced too. Hence $\ell(u)=\ell(a\xi(b\zeta(u)))$.
But this is possible only if the words $a$ and $b$ are empty and the
endomorphism $\xi\zeta$ maps every letter of $u$ to a letter. Further, $c(u)=
c(\xi\zeta(u))$ because the identity $u=\xi\zeta(u)$ is balanced. This means
that $\xi\zeta$ maps distinct letters of $u$ to distinct letters. Therefore
both endomorphisms $\xi$ and $\zeta$ have the same property and the
restrictions of $\xi$ and $\zeta$ on the semigroups $\F_{c(v)}$ and
$F_{c(u)}$ respectively are isomorphisms between these semigroups. Thus the
identity $u=\xi(v)$ is balanced and $\xi$ perfoms an isomorphism from
$\F_{c(v)}$ to $\F_{c(u)}$. Therefore $u$ is $\mathcal{COM}$-equivalent to
$v$.
\end{proof}

\emph{Proof of Theorem}~\ref{Com mod nil-nec}. Suppose that a commutative
nil-variety $\mathcal V$ is modular in \textbf{Com} and consider any identity
$u=v$ satisfied in $\mathcal V$. Suppose that the identities $u=0$ and $v=0$
are not satisfied in $\mathcal V$. We aim to prove that the identity $u=v$
follows from the commutative law and some substitutive identity satisfied in
$\mathcal V$.

By Lemma~\ref{V+G=ZR(V)+G} there is an Abelian periodic group variety
$\mathcal G$ such that
\begin{equation}
\label{G+V=G+ZR(V)} \mathcal{G\vee V=G\vee\ZR(V)}\ldotp
\end{equation}
Put $\mathcal X=\var\{xy=yx,\,u=0\}$, $\mathcal Y=\var\{xy=yx,\,v=0\}$, and
$\mathcal{Z=X\vee G}$. The variety $\mathcal G$ satisfies the identity $x^ny=
y$ for some $n$ and therefore the identity $x^nu=u$. This identity holds also
in $\mathcal X$, so it holds in $\mathcal Z$ and therefore in $\mathcal{V
\wedge Z}$. Being a nil-variety, $\mathcal{V\wedge Z}$ satisfies the identity
$u=0$ by Lemma~\ref{split}(ii). The identity $u=v$ holds in $\mathcal V$,
whence $u=v=0$ holds in $\mathcal{V\wedge Z}$, i.~e.
\begin{equation}
\label{inclusion}
\mathcal{V\wedge Z\subseteq Y}\ldotp
\end{equation}
So we have
\begin{align*}
\mathcal{(\ZR(V)\wedge X)\vee G}&\subseteq\mathcal{(\ZR(V)\vee G)\wedge(X\vee
G)}&&\\
&\mathcal{=(\ZR(V)\vee G)\wedge Z}&&\\
&\mathcal{=(V\vee G)\wedge Z}&&\text{by \eqref{G+V=G+ZR(V)}}\\
&\mathcal{=(V\wedge Z)\vee G}&&\text{because $\mathcal V$ is modular}\\
&&&\text{in \textbf{Com} and}\ \mathcal{G\subseteq Z}\\
&\mathcal{\subseteq Y\vee G}&&\text{by \eqref{inclusion}}\ldotp
\end{align*}
Thus $\mathcal{(\ZR(V)\wedge X)\vee G\subseteq Y\vee G}$. The identity $v=v
x^n$ holds in $\mathcal{Y\vee G}$, so it holds in $\mathcal{(\ZR(V)\wedge X)
\vee G}$ and therefore in $\mathcal{\ZR(V)\wedge X}$. Since $\mathcal{\ZR(V)
\wedge X}$ is a nil-variety, it satisfies the identity $v=0$ by Lemma~\ref
{split}(ii). Hence there is a deduction of this identity from identities of
the varieties $\ZR(\mathcal V)$ and $\mathcal X$, that is, a sequence of
words $w_0,w_1,\dots,w_n$ such that $v\equiv w_0$ and each of the identities
$w_0=w_1$, $w_1=w_2$, $w_{n-1}=w_n$, and $w_n=0$ holds in one of the
varieties $\ZR(\mathcal V)$ or $\mathcal X$. We may assume without loss of
generality that $w_0,w_1,\dots,w_n$ is the shortest sequence with these
properties. In particular, for any $i=0,1,\dots,n-1$, none of the varieties
$\ZR(\mathcal V)$ and $\mathcal X$ satisfies the identity $w_i=0$, and the
identity $w_i=w_{i+1}$ does not hold in both the varieties $\ZR(\mathcal V)$
and $\mathcal X$ simultaneously. Suppose that $n>0$. Let $i\in\{0,1,\dots,
n-1\}$. Since the varieties $\ZR(\mathcal V)$ and $\mathcal X$ are 0-reduced
in $\mathbf{Com}$ and none of these varieties satisfies the identity $w_i=0$,
the identity $w_i=w_{i+1}$ holds in $\mathcal{COM}$ and therefore, in both
the varieties $\ZR(\mathcal V)$ and $\mathcal X$. A contradiction shows that
$n=0$. Thus the identity $v=0$ holds in one of the varieties $\ZR(\mathcal
V)$ or $\mathcal X$. But this identity fails in $\ZR(\mathcal V)$ because it
fails in $\mathcal V$. So $v=0$ holds in $\mathcal X$. Analogously,
considering the variety $\mathcal{Y\vee G}$ rather than $\mathcal{X\vee G}$,
we can prove that the identity $u=0$ holds in $\mathcal Y$. This means that
$\mathcal{X=Y}$, so the identity systems $\{xy=yx,u=0\}$ and $\{xy=yx,v=0\}$
are equivalent. By Lemma~\ref{equivalence} the words $u$ and $v$ are
$\mathcal{COM}$-equivalent. Hence there is some automorphism $\xi$ on $\F
(\mathcal{COM})$ such that the identity $u=\xi(v)$ holds in $\mathcal{COM}$.
In particular, this identity is balanced (by Lemma~\ref{oc ident}) and holds
in $\mathcal V$. The identity $v=\xi(v)$ holds in $\mathcal V$ because the
identities $u=v$ and $u=\xi(v)$ hold in $\mathcal V$. If $c(v)\ne c(\xi(v))$
then $\mathcal V$ satisfies the identity $v=0$ by Lemma~\ref{split}(i). But
this is not true. Therefore $c(v)=c(\xi(v))$. This means that the identity
$v=\xi(v)$ is substitutive. Being balanced, the identity $u=\xi(v)$ follows
from the commutative law. The identities $u=\xi(v)$ and $v=\xi(v)$ imply $u=
v$. Therefore, $u=v$ follows from $v=\xi(v)$ and the commutative law. Since
the identity $v=\xi(v)$ is substitutive and holds in $\mathcal V$, we are
done.\qed

\smallskip

Theorem~\ref{Com mod nil-nec} and the `commutative half' of Lemma~\ref
{0-red is mod} provide a necessary and a sufficient condition for a
commutative nil-variety to be modular in \textbf{Com} respectively. The gap
between these conditions seems to be not very large. But the necessary
condition is not a sufficient one, while the sufficient condition is not a
necessary one. Indeed, it may be checked that the variety $\var\{xyzt=x^3=
0,\,x^2y=y^2x,\,xy=yx\}$ is modular in \textbf{Com} although it is not
0-reduced in \textbf{Com}, while the variety $\var\{x^5=0,\,x^3y^2=y^3x^2,\,x
y=yx\}$ is not modular in \textbf{Com} although it is given within $\mathcal
{COM}$ by 0-reduced and substitutive identities only.

\section{Corollaries}
\label{corol}

It was verified in~\cite{Shaprynskii-Vernikov-lmod} that a lower-modular
semigroup variety is modular. Theorem~\ref{Com lmod}, together with results
of~\cite{Shaprynskii-dn}, implies the following `commutative analog' of this
fact.

\begin{corollary}
\label{lmod is mod}
Every lower-modular in $\mathbf{Com}$ variety is modular in $\mathbf{Com}$.
\end{corollary}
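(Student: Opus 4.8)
The plan is to prove Corollary~\ref{lmod is mod} by cases according to the classification of lower-modular varieties supplied by Theorem~\ref{Com lmod}. So let $\mathcal V$ be a lower-modular in $\mathbf{Com}$ variety. By Theorem~\ref{Com lmod}, either $\mathcal{V=COM}$, or $\mathcal{V=M\vee N}$ with $\mathcal M\in\{\mathcal T,\mathcal{SL}\}$ and $\mathcal N$ a $0$-reduced in $\mathbf{Com}$ variety. The case $\mathcal{V=COM}$ is immediate: the greatest element of any lattice is neutral, hence modular, so $\mathcal{COM}$ is modular in $\mathbf{Com}$. Likewise, if $\mathcal{M=T}$ then $\mathcal{V=N}$ is $0$-reduced in $\mathbf{Com}$, hence modular in $\mathbf{Com}$ by Lemma~\ref{0-red is mod}. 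So the only genuine case is $\mathcal{M=SL}$, i.e.\ $\mathcal{V=SL\vee N}$ with $\mathcal N$ a $0$-reduced in $\mathbf{Com}$ variety.

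For that case I would invoke Lemma~\ref{join with SL}. Modularity is an $I$-element condition for the modular-law identity $I$, which is satisfied by distributive lattices; hence $\mathcal N$ is modular in $\mathbf{Com}$ if and only if $\mathcal{N\vee SL}=\mathcal V$ is modular in $\mathbf{Com}$. Since $\mathcal N$ is $0$-reduced in $\mathbf{Com}$, it is modular in $\mathbf{Com}$ by Lemma~\ref{0-red is mod}, and therefore $\mathcal V$ is modular in $\mathbf{Com}$ as well. This completes the proof.

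The argument is essentially a bookkeeping exercise once Theorem~\ref{Com lmod} is in hand, so I do not expect any real obstacle; the one point that needs a word of care is the observation that the modular law is expressible as a single lattice identity (using the reformulation $(x\vee y)\wedge(y\vee z)=(x\wedge(y\vee z))\vee y$ recorded in Section~\ref{prel}) which holds in all distributive lattices, so that Lemma~\ref{join with SL} genuinely applies to modularity. I would state this explicitly to make the invocation of Lemma~\ref{join with SL} unambiguous, and then the three-case split above finishes everything.
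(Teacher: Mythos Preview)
Your proof is correct and follows essentially the same route as the paper: apply Theorem~\ref{Com lmod}, observe that the 0-reduced in \textbf{Com} summand $\mathcal N$ is modular in \textbf{Com} by Lemma~\ref{0-red is mod}, and then invoke Lemma~\ref{join with SL} to pass from $\mathcal N$ to $\mathcal{N\vee SL}$. The paper's version is terser (it does not separate the cases $\mathcal{M=T}$ and $\mathcal{M=SL}$, since Lemma~\ref{join with SL} handles both at once), but the argument is the same.
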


\begin{proof}
Let $\mathcal V$ be a lower-modular in $\mathbf{Com}$ variety. We may assume
that $\mathcal{V\ne COM}$. Then Theorem~\ref{Com lmod} implies that $\mathcal
{V=M\vee N}$ where $\mathcal M$ is one of the varieties $\mathcal T$ or
$\mathcal{SL}$, while $\mathcal N$ is a 0-reduced in $\mathbf{Com}$ variety.
The variety $\mathcal N$ is modular in $\mathbf{Com}$ by Lemma~\ref
{0-red is mod}. It remains to refer to Lemma~\ref{join with SL}.
\end{proof}

By the way, we note that all five other possible implications between
properties of being a modular in \textbf{Com} variety, a lower-modular in
\textbf{Com} variety and an upper-modular in \textbf{Com} variety are fail.
For instance:
\begin{itemize}
\item[$\bullet$]the variety $\var\{xyzt=x^3=0,\,x^2y=y^2x,\,xy=yx\}$ is
modular in \textbf{Com} (as we have already mentioned at the end of
Section~\ref{proof}) but not lower-modular in \textbf{Com} (by Theorem~\ref
{Com lmod});
\item[$\bullet$]the variety $\var\{x^3=0,\,xy=yx\}$ is modular in \textbf
{Com} (by Lemma~\ref{0-red is mod}) and lower-modular in \textbf{Com} (by
Lemma~\ref{lmod nil}) but not upper-modular in \textbf{Com} (by Lemma~\ref
{umod 0-red});
\item[$\bullet$]the variety of Abelian groups of a prime exponent is
upper-modular in \textbf{Com} (because this variety is an atom of the lattice
\textbf{Com}) but neither modular in \textbf{Com} (by Theorem~\ref
{Com mod nec}) nor lower-modular in \textbf{Com} (by Theorem~\ref{Com lmod}).
\end{itemize}

It was proved in~\cite{Shaprynskii-dn}, Theorem~1.2, that a commutative
semigroup variety is neutral in $\mathbf{Com}$ if and only if it is both
distributive and codistributive in $\mathbf{Com}$. This assertion is
generalized by the following

\begin{corollary}
\label{lmod and umod is neut}
For a commutative semigroup variety $\mathcal V$, the following are
equivalent:
\begin{itemize}
\item[\textup{a)}]$\mathcal V$ is both lower-modular and upper-modular in
$\mathbf{Com}$;
\item[\textup{b)}]$\mathcal V$ is neutral in $\mathbf{Com}$;
\item[\textup{c)}]either $\mathcal{V=COM}$ or $\mathcal{V=M\vee N}$ where
$\mathcal M$ is one of the varieties $\mathcal T$ or $\mathcal{SL}$, while
$\mathcal N$ satisfies the identities $x^2y=0$ and $xy=yx$.
\end{itemize}
\end{corollary}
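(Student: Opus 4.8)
The plan is to prove the cyclic chain of implications $\text{b)}\Rightarrow\text{a)}\Rightarrow\text{c)}\Rightarrow\text{b)}$, using the three theorems together with the characterizations of $0$-reduced in $\mathbf{Com}$ varieties that are upper-modular (Lemma~\ref{umod 0-red}) and lower-modular (Lemma~\ref{lmod nil}), and the cited description of neutral elements from~\cite{Shaprynskii-dn}.

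The implication b)$\Rightarrow$a) is immediate: every neutral element of a lattice is modular, hence in particular both lower-modular and upper-modular, and the same holds relativised to $\mathbf{Com}$. For a)$\Rightarrow$c), suppose $\mathcal V$ is both lower-modular and upper-modular in $\mathbf{Com}$ and $\mathcal{V\ne COM}$. By Theorem~\ref{Com lmod} we have $\mathcal{V=M\vee N}$ with $\mathcal M\in\{\mathcal T,\mathcal{SL}\}$ and $\mathcal N$ a $0$-reduced in $\mathbf{Com}$ variety. By Lemma~\ref{join with SL} the variety $\mathcal N$ is itself both lower-modular and upper-modular in $\mathbf{Com}$ (the relevant lattice identities hold in distributive lattices). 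Now $\mathcal N$, being $0$-reduced in $\mathbf{Com}$ and upper-modular in $\mathbf{Com}$, satisfies the identity $x^2y=0$ by Lemma~\ref{umod 0-red}; together with the commutative law this is exactly condition c). Finally, for c)$\Rightarrow$b), if $\mathcal{V=COM}$ then it is the top element of $\mathbf{Com}$ and hence neutral; otherwise $\mathcal{V=M\vee N}$ with $\mathcal M\in\{\mathcal T,\mathcal{SL}\}$ and $\mathcal N$ satisfying $x^2y=0$ and $xy=yx$, and one invokes the description of neutral elements of $\mathbf{Com}$ from~\cite{Shaprynskii-dn} (which, as the excerpt indicates, says precisely that the neutral-in-$\mathbf{Com}$ varieties are $\mathcal{COM}$ and the joins $\mathcal{M\vee N}$ of this form) to conclude that $\mathcal V$ is neutral in $\mathbf{Com}$.

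The only genuinely delicate point is making sure that each of the four lattice-theoretic properties involved (lower-modularity, upper-modularity, neutrality, modularity) is preserved under the passage from $\mathcal{V=M\vee N}$ to $\mathcal N$ and back, i.e.\ that $\mathcal V$ has the property if and only if $\mathcal N$ does. For lower- and upper-modularity this is Lemma~\ref{join with SL}, since these are $I$-elements for lattice identities valid in all distributive lattices; for neutrality one uses the analogous statement implicit in~\cite{Shaprynskii-dn} (neutrality is likewise an $I$-element condition). The main obstacle is therefore bookkeeping rather than mathematics: one must cite the correct statement from~\cite{Shaprynskii-dn} for the equivalence c)$\Leftrightarrow$b), and be careful that Lemma~\ref{umod 0-red} is applied only to the $0$-reduced-in-$\mathbf{Com}$ piece $\mathcal N$ and not to $\mathcal V$ itself. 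Once these references are in place the proof is a short three-line cycle; I would write it essentially as the implications above, with explicit invocations of Theorem~\ref{Com lmod}, Lemma~\ref{join with SL}, Lemma~\ref{umod 0-red}, and Theorem~1.2 of~\cite{Shaprynskii-dn}.
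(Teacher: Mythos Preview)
Your proof is essentially the same as the paper's: the paper cites \cite[Theorem~1.2]{Shaprynskii-dn} for b)$\Leftrightarrow$c), calls b)$\Rightarrow$a) evident, and proves a)$\Rightarrow$c) exactly as you do via Theorem~\ref{Com lmod}, Lemma~\ref{join with SL}, and Lemma~\ref{umod 0-red}. One small correction: in your justification of b)$\Rightarrow$a) you write ``every neutral element \ldots\ is modular, hence in particular both lower-modular and upper-modular,'' but modularity does \emph{not} imply lower- or upper-modularity; the correct chain is neutral $\Rightarrow$ distributive $\Rightarrow$ lower-modular and neutral $\Rightarrow$ codistributive $\Rightarrow$ upper-modular (as noted in the introduction of the paper).
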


\begin{proof}
The equivalence of the statements~b) and~c) is proved in~\cite
{Shaprynskii-dn}, Theorem~1.2, while the implication~b)~$\longrightarrow$~a)
is evident.

a)~$\longrightarrow$~c). Suppose that a variety $\mathcal V$ is both
lower-modular and upper-modular in $\mathbf{Com}$. We may assume that
$\mathcal{V\ne COM}$. Then Theorem~\ref{Com lmod} implies that $\mathcal{V=M
\vee N}$ where $\mathcal M$ is one of the varieties $\mathcal T$ or $\mathcal
{SL}$, while $\mathcal N$ is a 0-reduced in $\mathbf{Com}$ variety. The
variety $\mathcal N$ is upper-modular in $\mathbf{Com}$ by Lemma~\ref
{join with SL}. It remains to refer to Lemma~\ref{umod 0-red}.
\end{proof}

Note that the analog of Corollary~\ref{lmod and umod is neut} for the lattice
\textbf{SEM} also is true (\cite{Vernikov-08}, Corollary~3.5).


\begin{thebibliography}{99}
\bibitem{Burris-Nelson-71}
\textsf{S.\,Burris} and \textsf{E.Nelson}, Embedding the dual of $\Pi_m$ in
the lattice of equational classes of commutative semigroups, \emph{Proc.\
Amer.\ Math.\ Soc.}, \textbf{30} (1971),
37--39.
\bibitem{Golubov-Sapir-82}
\textsf{E.\,A.\,Golubov} and \textsf{M.\,V.\,Sapir}, Residually small
varieties of semigroups, \emph{Izvestiya VUZ. Matematika}, No.\,11 (1982),
21--29, in Russian; Engl.\ translation: \emph{Soviet Math. Izv.\ VUZ},
\textbf{26}, No.\,11 (1982), 25--36.
\bibitem{Gratzer-98}
\textsf{G.\,Gr\"atzer}, General Lattice Theory, 2-nd ed., Birkhauser Verlag,
Basel (1998).
\bibitem{Jezek-81}
\textsf{J.~Je\v{z}ek}, The lattice of equational theories. Part~I: modular
elements, Czechosl.\ Math.\ J., \textbf{31} (1981), 127--152.
\bibitem{Jezek-McKenzie-93}
\textsf{J.\,Je\v{z}ek} and \textsf{R.\,N.\,McKenzie}, Definability in the
lattice of equational theories of semigroups, \emph{Semigroup Forum}, \textbf
{46} (1993), 199--245.
\bibitem{Kisielewicz-94}
\textsf{A.\,Kisielewicz}, Varieties of commutative semigroups, \emph{Trans.
Amer. Math. Soc.}, \textbf{342} (1994), 275--306.
\bibitem{Perkins-69}
\textsf{P.\,Perkins}, Bases for equational theories of semigroups, \emph{J.
Algebra}, \textbf{11} (1969), 298--314.
\bibitem{Pudlak-Tuma-80}
\textsf{P.\,Pudl\'ak} and \textsf{J.\,T\.uma}, Every finite lattice can be
embedded in a finite partition lattice, \emph{Algebra Universalis}, \textbf
{10} (1980), 74--95.
\bibitem{Shaprynskii-dn}
\textsf{V.\,Yu.\,Shaprynski\v{\i}}, Distributive and neutral elements of the
lattice of commutative semigroup varieties, \emph{Izvestiya VUZ. Matematika},
accepted, in Russian.
\bibitem{Shaprynskii-Vernikov-lmod}
\textsf{V.\,Yu.\,Shaprynski\v{\i}} and \textsf{B.\,M.\,Vernikov},
Lower-modular elements of the lattice of semigroup varieties.~III, \emph
{Acta. Sci. Math.} (\emph{Szeged}), accepted.
\bibitem{Shevrin-Vernikov-Volkov-09}
\textsf{L.\,N.\,Shevrin}, \textsf{B.\,M.\,Vernikov}, and \textsf
{M.\,V.\,Volkov}, Lattices of semigroup varieties, \emph{Izvestiya VUZ.
Matematika}, No.\,3 (2009), 3--36, in Russian; Engl.\ translation: \emph
{Russian Math. Izv.\ VUZ}, \textbf{53}, No.\,3 (2009), 1--28.
\bibitem{Vernikov-07-mod}
\textsf{B.\,M.\,Vernikov}, On modular elements of the lattice of semigroup
varieties, \emph{Comment.\ Math.\ Univ.\ Carol.}, \textbf{48} (2007),
595--606.
\bibitem{Vernikov-07-lmod}
\textsf{B.\,M.\,Vernikov}, Lower-modular elements of the lattice of semigroup
varieties, \emph{Semigroup Forum}, \textbf{75} (2007), 554--566.
\bibitem{Vernikov-08}
\textsf{B.\,M.\,Vernikov}, Lower-modular elements of the lattice of semigroup
varieties.~II, \emph{Acta Sci.\ Math.}\ (\emph{Szeged}), \textbf{74} (2008),
539--556.
\bibitem{Vernikov-Shaprynskii-10}
\textsf{B.\,M.\,Vernikov} and \textsf{V.\,Yu.\,Shaprynski\v{\i}},
Distributive elements of the lattice of semigroup varieties,
\emph{Algebra i Logika}, \textbf{49} (2010), 303--330, in Russian; Engl.\
translation: \emph{Algebra and Logic}, \textbf{49} (2010), 201--220.
\bibitem{Vernikov-Volkov-88}
\textsf{B.\,M.\,Vernikov} and \textsf{M.\,V.\,Volkov}, Lattices of nilpotent
semigroup varieties, \emph{Algebraicheskie Systemy i Ih Mnogoobraziya} (\emph
{Algebraic Systems and Their Varieties}), Sverdlovsk: Ural State University
(1988), 53--65, in Russian.
\bibitem{Volkov-89}
\textsf{M.\,V.\,Volkov}, Semigroup varieties with modular subvariety
lattices, \emph{Izvestiya VUZ. Matematika}, No.\,6 (1989), 51--60, in
Russian; Engl.\ translation: \emph{Soviet Math. Izv.\ VUZ}, \textbf{33},
No.\,6 (1989), 48--58.
\bibitem{Volkov-05}
\textsf{M.\,V.\,Volkov}, Modular elements of the lattice of semigroup
varieties, \emph{Contrib.\ General Algebra}, \textbf{16} (2005), 275--288.
\end{thebibliography}
\end{document}